\newcommand{\Gr}{Grass(p,n)}
\newcommand{\M}{\mathcal{M}}
\newcommand{\E}{\mathbb{R}^m}
\newcommand{\IAM}{I\!A\!M}
\newcommand{\AIAM}{A\!I\!A\!M}
\newtheorem{assumption}{Assumption}
\newtheorem{remark}{Remark}
\newtheorem{definition}{Definition}
\newtheorem{proposition}{Proposition}
\newenvironment{proof}{\noindent Proof:}{\hfill $\square$\\}
\newtheorem{lemma}{Lemma}
\title{Consensus optimization on manifolds}
\author{Alain Sarlette \and Rodolphe Sepulchre\thanks{Department of Electrical Engineering and Computer Science (Montefiore Institute), University of Li\`ege, Sart-Tilman Bldg. B28, B-4000 Li\`ege, Belgium ({\tt alain.sarlette@ulg.ac.be, r.sepulchre@ulg.ac.be}).}}
\begin{document}
\pagestyle{myheadings}
\markboth{SIAM/SICON preprint: subm 09/2006; rev 11/2007; acc 04/2008; publ? $\geq$ 2009.}{SIAM/SICON preprint: subm 09/2006; rev 11/2007; acc 04/2008; publ? $\geq$ 2009.}
\maketitle
\thanks{This paper presents research results of the Belgian Network DYSCO (Dynamical Systems, Control, and Optimization), funded by the Interuniversity Attraction Poles Programme, initiated by the Belgian State, Science Policy Office. The scientific responsibility rests with its authors. Alain Sarlette is supported as an FNRS fellow (Belgian Fund for Scientific Research). The authors want to thank P.-A. Absil for valuable discussions.
Preprint submitted to SIAM/SICON September 2006; revised November 2007; accepted for publication April 2008; publication date unknown before 2009.}


\begin{abstract}
The present paper considers distributed consensus algorithms that involve $N$ agents evolving on a connected compact homogeneous manifold. The agents track no external reference and communicate their relative
state according to a communication graph. The consensus problem is formulated in terms of the extrema of a cost function. This leads to efficient gradient algorithms to synchronize (i.e. maximizing the consensus) or balance (i.e. minimizing the consensus) the agents; a convenient adaptation of the gradient algorithms is used when the communication graph is directed and time-varying. The cost function is linked to a specific centroid definition on manifolds, introduced here as the \emph{induced arithmetic mean}, that is easily computable in closed form and may be of independent interest for a number of manifolds. The special orthogonal group $SO(n)$ and the Grassmann manifold $\Gr$ are treated as original examples. A link is also drawn with the many existing results on the circle.
\end{abstract}


\pagestyle{myheadings}
\thispagestyle{plain}



\section{Introduction}
The distributed computation of means/averages of datasets (in an algorithmic setting) and the \emph{synchronization} of a set of agents (in a control setting) --- i.e. driving all the agents to a common point in state space --- are ubiquitous tasks in current engineering problems. Likewise, spreading a set of agents in the available state space --- linked to the definition of \emph{balancing} in \S 4 --- is a classical problem of growing interest. Practical applications include autonomous swarm/formation operation (e.g. \cite{SPL2005, LEONARDgen, JADBABAIE, JandK, SO3book}), distributed decision making (e.g. \cite{olfati,Tsitsiklis2}), neural and communication networks (e.g. \cite{Tsitsiklis1, HOPFIELD}), clustering and other reduction methods (e.g. \cite{TheisGr}), optimal covering or coding (e.g. \cite{Codingbook, Barg1, Conway, BULLO}) and other fields where averaging/synchronizing or distributing a set of points appear as sub-problems. In a modeling framework, the understanding of synchronization or more generally swarm behavior has also led to many important studies (e.g. \cite{Kuramoto, Sync, VICSEK}).

Synchronization algorithms are well understood in Euclidean spaces (e.g. \cite{MOREAU, MOREAU2, Tsitsiklis2, olfati}). They are based on the natural definition and distributed computation of the centroid in $\mathbb{R}^m$. However, many applications above involve manifolds that are not homeomorphic to an Euclidean space. Even for formations moving in $\mathbb{R}^2$ or $\mathbb{R}^3$, the agents' orientations evolve in a manifold $SO(2) \cong S^1$ or $SO(3)$.
Balancing only makes sense on compact state spaces; though many theoretical results concern convex or star-shaped subsets of $\E$ (e.g. \cite{BULLO}), most applications involve compact manifolds. It seems that the study of global synchronization or balancing in non-Euclidean manifolds is not widely covered in the literature, except for the circle.

The present paper proposes algorithms for global synchronization and balancing --- grouped under the term \emph{consensus} --- on connected compact homogeneous manifolds. A homogeneous manifold $\M$ is isomorphic to the quotient of two Lie groups. Intuitively, it is a manifold on which ``all points are equivalent''. This makes the problem symmetric with respect to the absolute position on the manifold and allows to focus on \emph{configurations} of the swarm, i.e. \emph{relative positions} of the agents.

The main idea is to embed $\M$ in $\E$ and measure distances between agents in $\E$ in order to build a convenient cost function for an optimization-based approach. The related centroid on $\M$ may be interesting on its own account; it is therefore studied in more detail in \S 3.

Throughout the paper, the abstract concepts are illustrated on the special orthogonal group $SO(n)$, the Grassmann manifold $\Gr$ of $p$-dimensional vector spaces in $\mathbb{R}^n$ and sometimes the circle $S^1$, which is in fact isomorphic to both $SO(2)$ and $Grass(1,2)$. Other manifolds to which the present framework could be applied include the $n$-dimensional spheres $S^n$ and the connected compact Lie groups. The circle $S^1$ is the simplest example; it links the present work to existing results in \cite{SPL2005, SPLbook, MY2, TAC2}. $SO(n)$ is important in control applications as the natural state space for orientations of $n$-dimensional rigid bodies. $\Gr$ rather appears in algorithmic problems; \cite{Conway} mentions the optimal placement of $N$ laser beams for cancer treatment and the projection of multi-dimensional data on $N$ representative planes as practical applications of optimal distributions on $\Gr$.

The paper is organized as follows. Previous work is briefly reviewed in \S 1.1. Section 2 introduces concepts and notations about graph theory, $SO(n)$ and $\Gr$. Section 3 is devoted to the induced arithmetic mean. A definition of consensus is presented in \S 4. Section 5 introduces a cost function to express the consensus problem in an optimization setting. Section 6 derives gradient algorithms based on this cost function, the only communicated information being the relative positions of interconnected agents; convergence is proved for any connected, fixed and undirected communication graph. Algorithms whose convergence properties can be guaranteed under possibly directed, time-varying and disconnected communication graphs are presented in \S 7; they employ an auxiliary variable that evolves in the embedding space $\E$.

\subsection{Previous work}
Most of the work related to synchronization and balancing on manifolds concerns the circle $S^1$. The most extensive literature on the subject derives from the Kuramoto model (see \cite{StrKur} for a review). Recently however, synchronization on the circle has been considered from a control perspective, the state variables representing headings of agents in the plane. Most results cover \emph{local} convergence \cite{JADBABAIE, MOREAU}. An interesting set of \emph{globally} convergent algorithms in $SE(2) = S^1 \times \mathbb{R}^2$ is presented in \cite{SPL2005}, but they require all-to-all communication. Some problems related to global discrete-time synchronization on $S^1$ under different communication constraints are discussed in \cite{MY1}, where connections of the control problem with various existing models are made. Stronger results are presented in \cite{MY2} for global synchronization and balancing on $S^1$ with varying, directed communication links, at the cost of introducing estimator variables that communicating agents must exchange. Finally, \cite{TAC2} presents results on $SE(2)$ similar to those of \cite{SPL2005} but under relaxed communication assumptions, using among others the estimator strategy of \cite{MY2, LUCA1}.

Several authors have already presented algorithms that asymptotically synchronize satellite attitudes, involving the rotation group $SO(3)$. They often rely on tracking a common external reference (e.g. \cite{BeardOnSats}) or leader (e.g. \cite{NorwayLeaderFollower,SO3book}). The use of the convenient but non-unique quaternion representation for $SO(3)$ produces unwanted artefacts in the satellites' motions. Attitude synchronization without common references and quaternion artefacts is studied in \cite{Sujit1}; using the same distance measure as the present work, an artificial coupling potential is built to establish local stability. All these approaches explicitly incorporate the second-order rigid-body dynamics. In accordance with the consensus approach, the present paper reduces the agents to first-order kinematic models to focus on (almost) global convergence for various agent interconnections, without any leader or external reference. Application of this framework in a mechanical setting is discussed in a separate paper \cite{MY3}.
\vspace{2mm}

Synchronization or balancing on a manifold $\M$ is closely related to the definition and computation of a mean or centroid of points on $\M$, a basic problem that has attracted somewhat more attention, as can be seen from \cite{MoM:Hueper, MoM:B&S, MoM:Galp} among others.

A key element of the present paper is the computation of a centroid in the embedding space $\E$ of $\M$, which is then projected onto $\M$. This is connected to the ``projected arithmetic mean'' defined in \cite{MOAKHER_SO3} for $SO(3)$. In fact, the computation of statistics in a larger and simpler embedding manifold (usually Euclidean space) and projecting the result back onto the original manifold, goes back to 1972 \cite{1972mean}.

A short example in \cite{PAGrass} addresses the computation of a ``centroid of subspaces'', without much theoretical analysis; in fact, our algorithms on $\Gr$ are similar and can eventually be viewed as generalizing the developments in \cite{PAGrass} in the framework of consensus and synchronization. More recently, \cite{TheisGr} uses the centroid associated to the projector representation of $\Gr$, exactly as is done below but without going into theoretical details, to compute the cluster centers in a clustering algorithm. The distance measure associated to this centroid on $\Gr$ is called the \emph{chordal distance} in \cite{Conway,Barg1} where it is used to derive optimal distributions (``packings'') of $N$ agents on some specific Grassmann manifolds.\vspace{2mm}

Finally, the topic of optimization-based algorithm design on manifolds has considerably developed over the last decades (see e.g. \cite{Brockett1}, \cite{Edelman99} and the books \cite{H&Mbook, PAbook}).



\section{Preliminaries}

\subsection{Elements of graph theory} 
Consensus among a group of agents depends on the available communication links.
When considering limited agent interconnections, it is customary to represent communication links by means of a \emph{graph}. The graph $G$ is composed of $N$ \emph{vertices} (the $N$ agents) and contains the \emph{edge} $(j,k)$ if agent $j$ sends information to agent $k$, which is denoted $j \rightsquigarrow k$. A positive \emph{weight} $a_{jk}$ is associated to each edge $(j,k)$ to obtain a weighted graph; the weight is extended to any pair of vertices by imposing $a_{jk} = 0$ iff $(j,k)$ does not belong to the edges of $G$. The full notation for the resulting \emph{digraph} (directed graph) is $G(V,E,A)$ where $V=\lbrace \text{vertices} \rbrace$, $E=\lbrace \text{edges} \rbrace$ and $A$, containing the $a_{jk}$, is the \emph{adjacency matrix}. The convention $a_{kk}=0$ $\forall k$ is assumed for the representation of communication links.

The \emph{out-degree} of a vertex $k$ is the quantity $d^{(o)}_k = \sum_{j=1}^N a_{kj}$ of information leaving $k$ towards other agents; its \emph{in-degree} is the quantity $d^{(i)}_k = \sum_{j=1}^N a_{jk}$ of information received by $k$ from other agents. These degrees can be assembled in diagonal matrices $D^{(o)}$ and $D^{(i)}$. A graph is \emph{balanced} if $D^{(o)} = D^{(i)}$. This is satisfied in particular by \emph{undirected} graphs, for which $A = A^T$. A graph is \emph{bidirectional} if $(j,k) \in E \Leftrightarrow (k,j) \in E$ (but not necessarily $A = A^T$).

The \emph{Laplacian} $L$ of a graph is $L = D - A$. For directed graphs, $D^{(i)}$ or $D^{(o)}$ can be used, leading to the in-Laplacian $L^{(i)} = D^{(i)} - A$ and the out-Laplacian $L^{(o)} = D^{(o)} - A$. By construction, $L^{(i)}$ has zero column sums and $L^{(o)}$ has zero row sums. The spectrum of the Laplacian reflects several interesting properties of the associated graph, specially in the case of undirected graphs (see for example \cite{Graphs}).

$G(V,E,A)$ is \emph{strongly connected} if it contains a directed path from any vertex $j$ to any vertex $l$ (i.e. a sequence of vertices starting with $j$ and ending with $l$ such that $(v_k,v_{k+1}) \in E$ for any two consecutive vertices $v_k$, $v_{k+1}$); $G$ is \emph{weakly connected} if there is such a path in the \emph{associated undirected graph}, with adjacency matrix $A+A^T$.\vspace{2mm}

For time-varying interconnections, a time-varying graph $G(t)$ is used and all the previously defined elements simply depend on time. If the elements of $A(t)$ are bounded and satisfy some threshold $a_{jk}(t) \geq \delta > 0$ $\forall (j,k) \in E(t)$ and $\forall t$, then $G(t)$ is called a \emph{$\delta$-digraph}. The present paper always considers $\delta$-digraphs.

In a $\delta$-digraph $G(V,E,A)$, vertex $j$ is said to be \emph{connected} to vertex $k$ \emph{across $[t_1,t_2]$} if there is a path from $j$ to $k$ in the digraph
$\bar{G}(V,\bar{E},\bar{A})$ defined by 
\begin{eqnarray*}
& & \bar{a}_{jk} = \left\lbrace 
\begin{array}{ll}
\int_{t_1}^{t_2} a_{jk}(t) dt & \text{if } \int_{t_1}^{t_2} a_{jk}(t) dt \geq \delta \\
0 & \text{if } \int_{t_1}^{t_2} a_{jk}(t) dt < \delta
\end{array} \right. \\
& & (j,k) \in \bar{E} \text{ iff } \bar{a}_{jk} \neq 0 \; .
\end{eqnarray*}
$\bar{G}$ can be seen as a time-integrated graph while the $\delta$-criterion prevents vanishing edges. A $\delta$-digraph $G(t)$ is called \emph{uniformly connected} if there exist a vertex $k$ and a time horizon $T > 0$ such that $\forall t$, $k$ is connected to all other vertices across $[t,t+T]$.

\subsection{Specific manifolds} 
The concepts presented in this paper are illustrated on two particular manifolds: $SO(n)$ and $\Gr$.\vspace{2mm}

\paragraph*{The special orthogonal Lie group $SO(n)$} It can be viewed as the set of positively oriented orthonormal bases of $\mathbb{R}^n$, or equivalently as the set of rotation matrices in $\mathbb{R}^n$; it is the natural state space for the orientation of a rigid body in $\mathbb{R}^n$. In its canonical representation, used in the present paper, a point of $SO(n)$ is characterized by a real $n \times n$ orthogonal matrix $Q$ with determinant equal to $+1$. $SO(n)$ is homogeneous (as any Lie group), compact and connected. It has dimension $n(n-1)/2$.\vspace{2mm}

\paragraph*{The Grassmann manifold $\Gr$} Each point on $\Gr$ denotes a $p$-dimensional subspace $\mathcal{Y}$ of $\mathbb{R}^n$. The dimension of $\Gr$ is $p (n-p)$. Since $Grass(n-p,n)$ is isomorphic to $\Gr$ by identifying orthogonally complementary subspaces, this paper assumes without loss of generality that $p \leq \frac{n}{2}$. For the special case $p=1$, the Grassmann manifold $Grass(1,n)$ is also known as the \emph{projective space} in dimension $n$. $\Gr$ is connected, compact and homogeneous as the quotient of the orthogonal Lie group $O(n)$ by $O(p) \times O(n-p)$. Indeed, $\mathcal{Y} \in \Gr$ can be represented for instance by a (not necessarily positively oriented) orthonormal basis $Q \in O(n)$ whose first $p$ column-vectors span $\mathcal{Y}$; the same point $\mathcal{Y} \in \Gr$ is represented by any $Q$ whose first $p$ column-vectors span $\mathcal{Y}$ ($O(p)$-symmetry) and whose last $n-p$ column-vectors span the orthogonal complement of $\mathcal{Y}$ ($O(n-p)$-symmetry). Other quotient structures for $\Gr$ are discussed in \cite{PAGrass}.

A matrix manifold representation of $\Gr$ found in \cite{PAGrass} assigns to $\mathcal{Y}$ any $n \times p$ matrix $Y$ of $p$ orthonormal column-vectors spanning $\mathcal{Y}$ ($p$-basis representation); all $Y$ corresponding to rotations and reflections of the $p$ column-vectors in $\mathcal{Y}$ represent the same $\mathcal{Y}$ ($O(p)$-symmetry), so this representation is not unique. The dimension of this representation is $np-p(p+1)/2$. In \cite{SPGrass}, a point of $\Gr$ is represented by $\Pi = Y \, Y^T$, the orthonormal projector on $\mathcal{Y}$ (projector representation); using the orthonormal projector on the space orthogonal to $\mathcal{Y}$, $\Pi_{\bot} = I_n - Y \, Y^T$ where $I_n$ denotes the $n \times n$ identity matrix, is strictly equivalent. The main advantage of this representation is that there exists a bijection between $\Gr$ and the orthonormal projectors of rank $p$, such that the projector representation makes $\Gr$ an embedded submanifold of the cone $\mathbb{S}_{n}^{+}$ of $n \times n$ symmetric positive semi-definite matrices. A disadvantage of this representation is its large dimension $n(n+1)/2$.



\section{The induced arithmetic mean} A homogeneous manifold $\M$ is a manifold with a transitive group action by a Lie group $\mathcal{G}$: it is isomorphic to the quotient manifold $\mathcal{G}/\mathcal{H}$ of a group $\mathcal{G}$ by one of its subgroups $\mathcal{H}$. Informally, it can be seen as a manifold on which ``all points are equivalent''. The present paper considers connected compact homogeneous manifolds satisfying the following embedding property.\vspace{2mm}

\begin{assumption}\label{Ass1}
$\M$ is a connected compact homogeneous manifold smoothly embedded in $\E$ with the Euclidean norm $\Vert y \Vert = r_{\M}$ constant over $y \in \M$. The Lie group $\mathcal{G}$ acts as a subgroup of the orthogonal group on $\E$.
\end{assumption}
\vspace{2mm}

It is a well-known fact of differential geometry that any smooth $\tfrac{m}{2}$-dimensional Riemannian manifold can be smoothly embedded in $\mathbb{R}^{m}$. The additional condition $\Vert y \Vert = r_{\M}$ is in agreement with the fact that all points on $\M$ should be equivalent. It is sometimes preferred to represent $y \in \M$ by a matrix $B \in \mathbb{R}^{n_1 \times n_2}$ instead of a vector. Componentwise identification $\mathbb{R}^{n_1 \times n_2} \cong \mathbb{R}^m$ is assumed whenever necessary; the corresponding norm is the \emph{Frobenius norm} $\Vert B \Vert = \sqrt{\mathrm{trace}(B^T B)}$.
\vspace{2mm}

Consider a set of $N$ agents on a manifold $\M$ satisfying Assumption \ref{Ass1}. The position of agent $k$ is denoted by $y_k$ and its weight by $w_k$.\vspace{2mm}

\begin{definition}\label{Def1}
The \textbf{induced arithmetic mean} $\IAM \subseteq \M$ of $N$ agents of weights $w_k > 0$ and positions $y_k \in \M$, $k=1...N$, is the set of points in $\M$ that globally minimize the weighted sum of squared Euclidean distances in $\E$ to each $y_k$\emph{:}
\begin{equation}\label{Def:IAM}
\IAM \; = \; \mathop{\mathrm{argmin}}_{c \in \M} {\textstyle \sum_{k=1}^N} \, w_k \; d_{\E}^2(y_k,c) \; = \; \mathop{\mathrm{argmin}}_{c \in \M} {\textstyle \sum_{k=1}^N} \, w_k \; (y_k-c)^T(y_k-c) \; .
\end{equation}
The \textbf{anti-[induced arithmetic mean]} $\AIAM \subseteq \M$ is the set of points in $\M$ that globally maximize the weighted sum of squared Euclidean distances \emph{in $\E$} to each $y_k$\emph{:}
\begin{equation}\label{Def:AIAM}
\AIAM \; =  \; \mathop{\mathrm{argmax}}_{c \in \M} {\textstyle \sum_{k=1}^N} \, w_k \; d_{\E}^2(y_k,c) \; = \; \mathop{\mathrm{argmax}}_{c \in \M} {\textstyle \sum_{k=1}^N} \, w_k \; (y_k-c)^T(y_k-c) \; .
\end{equation}
\end{definition}

The terminology is derived from \cite{MOAKHER_SO3} where the $\IAM$ on $SO(3)$ is called the \emph{projected arithmetic mean}. The point in Definition \ref{Def1} is that distances are measured \emph{in the embedding space} $\E$. It thereby differs from the canonical definition of mean of $N$ agents on $\M$, the \emph{Karcher mean} \cite{Karcher,RiemannStatsPennec,KarcherMeanGroisser,MoM:Hueper}, which uses the geodesic distance $d_{\M}$ along the Riemannian manifold $\M$ (with, in the present setting, the Riemannian metric induced by the embedding of $\M$ in $\E$):
$$C_{Karcher} = \mathop{\mathrm{argmin}}_{c \in \M} \; {\textstyle \sum_{k=1}^N} \, w_k \; d_{\M}^2(y_k,c) \; .$$
The induced arithmetic mean has the following properties.
\begin{enumerate}
\item The $\IAM$ of a single point $y_1$ is the point itself.
\item The $\IAM$ is invariant under permutations of agents of equal weights.
\item The $\IAM$ commutes with the symmetry group of the homogeneous manifold.
\item The $\IAM$ does not always reduce to a single point.
\end{enumerate}
The last feature seems unavoidable for any mean (including the Karcher mean) that satisfies the other properties. The main advantage of the $\IAM$ over the Karcher mean is computational. The $\IAM$ and $\AIAM$ are closely related to the centroid.\vspace{2mm}

\begin{definition}\label{Def2} The \textbf{centroid} $C_e \in \E$ of $N$ weighted agents located on $\M$ is
$$C_e = \tfrac{1}{W} \, {\textstyle \sum_{k=1}^N} \, w_k \, y_k \; , \hspace{10mm} \text{where } \; W = {\textstyle \sum_{k=1}^N} \, w_k \; .$$
\end{definition}
Since $\Vert c \Vert = r_{\M}$ for $c \in \M$ by Assumption \ref{Ass1}, equivalent definitions for the $\IAM$ and $\AIAM$ are
\begin{equation}\label{Def:IAM&AIAM:Alt2}
\IAM = \mathop{\mathrm{argmax}}_{c \in \M} (c^T \, C_e) \hspace{5mm} \text{ and } \hspace{5mm}
\AIAM = \mathop{\mathrm{argmax}}_{c \in \M} (- c^T \, C_e) \; .
\end{equation}
Hence, computing the $\IAM$ and $\AIAM$ just involves a search for the global maximizers of a linear function on $\E$ in a very regular search space $\M$. Local maximization methods would even suffice if the linear function had no maxima on $\M$ other than the global maxima. This is the case for any linear function on $SO(n)$ and $\Gr$ (see \S 3.1) as well as the $n$-dimensional sphere $S^n$ in $\mathbb{R}^{n+1}$. Not knowing whether this property holds for all manifolds satisfying Assumption \ref{Ass1}, we formulate the following blanket assumption.\vspace{2mm}

\begin{assumption}\label{Ass2}
The local maxima of any linear function $f(c) = c^T \, b$ over $c \in \M$, with $b$ fixed in $\E$, are all global maxima.
\end{assumption}


\subsection{Examples} These examples exclusively consider the $\IAM$; from (\ref{Def:IAM&AIAM:Alt2}), the conclusions for the $\AIAM$ are simply obtained by replacing $C_e$ with $-C_e$.\vspace{2mm}

\paragraph*{The circle} The circle embedded in $\mathbb{R}^2$ with its center at the origin satisfies Assumptions \ref{Ass1} and \ref{Ass2}. The $\IAM$ is simply the central projection of $C_e$ onto the circle. Hence it corresponds to the whole circle if $C_e = 0$ and reduces to a single point in other situations. The $\IAM$ uses the chordal distance between points, while the Karcher mean would use arclength distance.\vspace{2mm}

\paragraph*{The special orthogonal group} The embedding of $SO(n)$ as orthogonal matrices $Q \in \mathbb{R}^{n \times n}$, $\mathrm{det}(Q) > 0$, satisfies Assumption \ref{Ass1} since $\Vert Q \Vert = \sqrt{\mathrm{trace}(Q^T Q)} = \sqrt{n}$. It also satisfies Assumption \ref{Ass2} (proof in \S 6). $C_e = \sum_k Q_k$ is a general $n \times n$ matrix. The $\IAM$ is linked to the \emph{polar decomposition} of $C_e$. Any matrix $B$ can be decomposed into $U R$ with $U$ orthogonal and $R$ symmetric positive semi-definite; $R$ is always unique, $U$ is unique if $B$ is non-singular \cite{RefMatrix}. Each $U$ is a global minimizer of $d_{\mathbb{R}^{n \times n}}(c,B)$ over $c \in O(n)$. Thus, if $\mathrm{det}(C_e) \geq 0$, the $\IAM$ contains all matrices $U$: $\mathrm{det}(U) > 0$ obtained from the polar decomposition of $C_e$; this was already noticed in \cite{MOAKHER_SO3}. When $\mathrm{det}(C_e) < 0$, the result is more complicated but still has a closed-form solution.\vspace{2mm}

\begin{proposition}\label{Prop1}
Consider $U$ an orthogonal matrix obtained from the polar decomposition $C_e = U R$. The $\IAM$ of $N$ points on $SO(n)$ is characterized as follows.
\begin{enumerate}
\item[] If $\mathrm{det}(C_e) \geq 0$, then $\IAM=\lbrace U: \mathrm{det}(U) > 0 \rbrace$. It reduces to a single point if the multiplicity of 0 as an eigenvalue of $C_e$ is less or equal to 1.
\item[] If $\mathrm{det}(C_e) \leq 0$, then $\IAM=\lbrace U H J H^T \rbrace$ where $\mathrm{det}(U) < 0$, $H$ contains the orthonormalized eigenvectors of $R$ with an eigenvector corresponding to the smallest eigenvalue of $R$ in the first column, and
$J=\left(
\begin{array}{cc}
-1 & 0 \\
0 & I_{n-1}
\end{array}
 \right) \; .$
The $\IAM$ reduces to a single point if the smallest eigenvalue of $R$ has multiplicity $1$.
\end{enumerate}
\end{proposition}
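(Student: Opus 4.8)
The plan is to use the reformulation (\ref{Def:IAM&AIAM:Alt2}) to turn Definition \ref{Def1} into the maximization of the linear function $Q \mapsto \mathrm{trace}(Q^T C_e)$ over $SO(n)$, and to solve this Procrustes-type problem explicitly from the polar structure of $C_e$. First I would write $C_e = U R$ with $R = (C_e^T C_e)^{1/2}$ symmetric positive semi-definite and $U$ orthogonal, and diagonalize $R = H\Lambda H^T$ with $\Lambda = \mathrm{diag}(\lambda_1,\dots,\lambda_n)$, $\lambda_i \geq 0$ the singular values of $C_e$, so that $|\det C_e| = \prod_i \lambda_i$. Substituting $Q = U H P^T H^T$ with $P \in O(n)$ gives $\mathrm{trace}(Q^T C_e) = \mathrm{trace}(P\Lambda) = \sum_i P_{ii}\lambda_i$ and $\det Q = \det U \cdot \det P$, so the problem becomes: maximize $\sum_i P_{ii}\lambda_i$ over $P \in O(n)$ subject to $\det P = \det U$; when $C_e$ is nonsingular this constraint picks out exactly one connected component of $O(n)$.

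The key step is to locate the critical points and the maximum of $Q \mapsto \mathrm{trace}(Q^T C_e)$ on $SO(n)$. Differentiating along curves $Q e^{t\Omega}$ with $\Omega$ skew-symmetric shows that at a critical point $S := Q^T C_e$ must be symmetric; then $S^2 = C_e^T C_e = R^2$ forces $S = H\,\mathrm{diag}(\varepsilon_1\lambda_1,\dots,\varepsilon_n\lambda_n)\,H^T$ with signs $\varepsilon_i\in\{\pm 1\}$ (the signs attached to vanishing $\lambda_i$ being irrelevant, and more precisely $S$ may carry a free orthogonal block on each eigenspace of $R$ with eigenvalue $0$), so that $Q = U H\,\mathrm{diag}(\varepsilon_1,\dots,\varepsilon_n)\,H^T$, $\mathrm{trace}(Q^T C_e) = \sum_i \varepsilon_i\lambda_i$, and (for nonsingular $C_e$) $\det Q = \det U\cdot\prod_i\varepsilon_i$. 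Since $\mathrm{trace}(Q^T C_e)$ is continuous and $SO(n)$ is compact, the global maximum is attained, hence at a critical point, and therefore equals the largest of these critical values compatible with $\det Q = +1$: if $\det C_e \geq 0$ one takes all $\varepsilon_i = +1$, obtaining value $\sum_i \lambda_i$ and $Q$ an orthogonal polar factor of $C_e$ with $\det Q \geq 0$; if $\det C_e < 0$ one is forced to use an odd number of negative signs and minimizes the loss $2\sum_{\varepsilon_i = -1}\lambda_i$ by flipping a single smallest eigenvalue, obtaining value $\sum_i\lambda_i - 2\min_i\lambda_i$ and $Q = U H J H^T$ with the smallest eigenvalue of $R$ placed first, as in the statement.

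It then remains to match these maximizer sets with the statement and to extract the singleton criteria. For $\det C_e \geq 0$ the maximizers are exactly the orthogonal polar factors $U$ of $C_e$ with nonnegative determinant; such a factor is unique iff $C_e$ is nonsingular or $0$ is a simple eigenvalue of $C_e$, i.e. iff $0$ has multiplicity at most $1$ as an eigenvalue of $C_e$ (equivalently of $R$). For $\det C_e \leq 0$ the remaining freedom in the maximizer is precisely the choice of the reflected line inside the smallest-eigenvalue eigenspace of $R$ (together with the free orthogonal block on the kernel of $R$ when $\det C_e = 0$), so the $\IAM$ is a single point iff the smallest eigenvalue of $R$ is simple. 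The two branches overlap when $\det C_e = 0$, and they agree there because $H J H^T$ acts only on a zero-eigendirection of $R$, whence $(U H J H^T) R = U R = C_e$ and $\{\,U H J H^T : \det U < 0\,\}$ is exactly the set of polar factors with positive determinant. The main obstacle is the careful bookkeeping in the rank-deficient case $\det C_e = 0$, where the $Q \leftrightarrow P$ correspondence no longer isolates a single component of $O(n)$, several polar factors $U$ coexist, and one must check both that the two branches describe the same set and that the singleton conditions are phrased correctly in terms of $C_e$ or $R$; the nonsingular part is essentially the classical polar-decomposition computation already noted in \cite{MOAKHER_SO3}, augmented by its determinant-constrained (``$\det=-1$'') Procrustes variant.
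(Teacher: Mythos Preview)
Your argument is correct and shares the paper's skeleton: reduce to maximizing $\mathrm{trace}(Q^T C_e)$, use the polar decomposition $C_e=UR$, characterize the critical points by the symmetry of $Q^TC_e$ (which is exactly the content of the paper's Lemma~\ref{lem1}), evaluate the critical values $\sum_i\varepsilon_i\lambda_i$, and pick out the maximizers compatible with $\det Q=+1$. The bookkeeping you flag in the rank-deficient case is the right thing to worry about, and your treatment of it is adequate.

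The one substantive difference is in how the maximum is isolated among the critical points. You invoke compactness of $SO(n)$ to say the global maximum is attained at some critical point and then simply read off the largest critical value. The paper instead shows that any critical point with $l\geq 2$ sign flips (or with $l=1$ but not on a smallest eigenvalue) is not even a \emph{local} maximum, by exhibiting an explicit nearby rotation $Q_\varepsilon$ with strictly larger value. Your route is shorter and entirely sufficient for Proposition~\ref{Prop1}. The paper's perturbation route is longer but buys something extra: it simultaneously proves Proposition~\ref{Prop8} (that $SO(n)$ satisfies Assumption~\ref{Ass2}, i.e.\ every local maximum of a linear function is global), which your compactness argument does not yield and which the paper needs later for Proposition~\ref{Prop6}.
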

\vspace{2mm}

\begin{proof}
It is provided in \S 6 after introducing further necessary material to compute the critical points of $c^T C_e$, among which the local maxima are selected.
\end{proof}\vspace{2mm}

\paragraph*{The Grassmann manifold} The representation of $\Gr$ with $p$-bases $Y_k$ is not an embedding and cannot be used in the proposed framework, because the $p$-dimensional subspace of $\mathbb{R}^n$ spanned by the columns of $C_e = \sum_k Y_k$ would depend on the particular matrices $Y_k$ chosen to represent the subspaces $\mathcal{Y}_k$. The $\IAM$ is defined with the projector representation, embedding $\Gr$ in $\mathbb{S}_n^+$. The latter satisfies Assumption \ref{Ass1}; the Frobenius norm of a $p$-rank projector is $\sqrt{p}$. It also satisfies Assumption \ref{Ass2} (proof in \S 6). The centroid $C_e$ of $N$ projectors is generally a symmetric positive semi-definite matrix of rank $\geq p$.\vspace{2mm}

\begin{proposition}\label{Prop2}
The $\IAM$ contains all dominant $p$-eigenspaces of $C_e$. It reduces to a single point if the $p$-largest and $(p+1)$-largest eigenvalues of $C_e$ are different.
\end{proposition}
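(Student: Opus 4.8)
The plan is to reduce the $\IAM$ on $\Gr$ to a linear maximization over rank-$p$ orthonormal projectors, and then to a linear program over the ``occupation numbers'' of the eigenvalues of $C_e$. By the characterization (\ref{Def:IAM&AIAM:Alt2}), a point of $\Gr$ lies in the $\IAM$ exactly when, written in the projector representation $\Pi = Y Y^T$ with $Y^T Y = I_p$, it maximizes $c^T C_e = \mathrm{trace}(\Pi\, C_e)$ over all such $\Pi$; here $C_e = \tfrac1W \sum_k w_k \Pi_k$ is symmetric (and positive semidefinite of rank $\geq p$, though only symmetry will matter). Diagonalizing $C_e = \sum_{i=1}^n \lambda_i v_i v_i^T$ with orthonormal $v_i$ and $\lambda_1 \geq \dots \geq \lambda_n$, one gets $\mathrm{trace}(\Pi C_e) = \sum_i \lambda_i\, s_i$, where $s_i := v_i^T \Pi v_i = \Vert \Pi v_i \Vert^2 \in [0,1]$ (since $\Pi$ is an orthogonal projector) and $\sum_i s_i = \mathrm{trace}\,\Pi = p$.

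Hence $\mathrm{trace}(\Pi C_e)$ is bounded above by the optimum of the linear program $\max \{ \sum_i \lambda_i s_i : 0 \leq s_i \leq 1,\ \sum_i s_i = p \}$, which is $\lambda_1 + \dots + \lambda_p$ (spend the unit budget on the largest coefficients). This bound is attained by the projector onto any sum of eigenspaces of $C_e$ of total dimension $p$ associated with its $p$ largest eigenvalues --- i.e. any dominant $p$-eigenspace --- which proves the first assertion.

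For uniqueness I would run the argument backwards. Write $\mathcal{V}_{>}$ for the span of the eigenvectors of $C_e$ with eigenvalue $> \lambda_{p+1}$, and, when $\lambda_p = \lambda_{p+1}$, $E$ for the eigenspace of that repeated value. If $\Pi$ attains the maximum then $(s_i)$ is optimal for the linear program, which forces $s_i = 1$ whenever $\lambda_i > \lambda_{p+1}$ and $s_i = 0$ whenever $\lambda_i < \lambda_p$ (otherwise one could move budget toward a strictly larger coefficient and strictly increase the objective). Since $v_i^T \Pi v_i = \Vert \Pi v_i \Vert^2$ with $\Vert \Pi v_i \Vert \leq \Vert v_i \Vert = 1$ and equality iff $v_i \in \mathrm{range}\,\Pi$, this gives $\mathcal{V}_{>} \subseteq \mathrm{range}\,\Pi$ and $\mathrm{range}\,\Pi \perp v_i$ for $\lambda_i < \lambda_p$. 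When $\lambda_p \neq \lambda_{p+1}$ we have $\dim \mathcal{V}_{>} = p$, so these two conditions force $\mathrm{range}\,\Pi = \mathcal{V}_{>}$ and the $\IAM$ is the single point $\mathcal{V}_{>}$; when $\lambda_p = \lambda_{p+1}$ only $\mathcal{V}_{>} \subseteq \mathrm{range}\,\Pi \subseteq \mathcal{V}_{>} \oplus E$ is forced, with a subspace of $E$ of dimension between $1$ and $\dim E - 1$ left free, hence a continuum of maximizers. The routine parts are the linear-program optimum and the projector inequality $\Vert \Pi v \Vert \leq \Vert v \Vert$; the step needing care is the converse --- arguing that at a maximizer the $s_i$ sit on the correct face of the LP polytope, and translating the extremal values $s_i \in \{0,1\}$ back into ``$\mathrm{range}\,\Pi$ is spanned by top eigenvectors of $C_e$'' --- where the bookkeeping is most delicate precisely when $\lambda_p$ or $\lambda_{p+1}$ is a repeated eigenvalue, i.e. exactly the boundary case that governs uniqueness.
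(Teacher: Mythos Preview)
Your argument is correct and is the classical Ky Fan / variational route: write $\mathrm{trace}(\Pi C_e)=\sum_i \lambda_i s_i$ with occupation numbers $s_i=v_i^T\Pi v_i\in[0,1]$ summing to $p$, solve the resulting linear program, and then translate the extremal face conditions $s_i\in\{0,1\}$ back to $\mathrm{range}\,\Pi$ via $\Vert\Pi v_i\Vert=\Vert v_i\Vert\Leftrightarrow v_i\in\mathrm{range}\,\Pi$. The paper proceeds differently: it computes the Riemannian gradient $\mathrm{grad}_{\Gr}(f)=\Pi C_e\Pi_\bot+\Pi_\bot C_e\Pi$, shows that its vanishing forces $\mathrm{range}\,\Pi$ to be an invariant subspace of $C_e$, and then rules out non-dominant invariant subspaces as \emph{local} maxima by an explicit rotation in a plane $(e_d,e_l)$ with $\mu_d<\mu_l$. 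Your approach is more elementary (no tangent-space projection, no curve variations) and your uniqueness discussion, including the degenerate case $\lambda_p=\lambda_{p+1}$, is more explicit than the paper's. What the paper's route buys is that it simultaneously establishes Assumption~\ref{Ass2} for $\Gr$ --- every local maximum is global --- which is needed later (Propositions~\ref{Prop6} and \ref{Prop7}); your LP argument only pins down the global maximizers, so if you adopt it you would still owe a separate verification of Assumption~\ref{Ass2}.
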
\vspace{2mm}

\begin{proof}
Following the same lines as for $SO(n)$, it is postponed to \S 6.\end{proof}
\vspace{2mm}

In fact, for $\mathcal{Y} \in \Gr$ with a $p$-basis $Y$ and the projector $\Pi_{\mathcal{Y}}= Y Y^T$, the cost function in (\ref{Def:IAM&AIAM:Alt2}) becomes
\begin{equation}\label{GrIAM_as_Rayleigh}
f(\Pi_{\mathcal{Y}}) = \mathrm{trace}(\Pi_{\mathcal{Y}} C_e) = \mathrm{trace}(Y^T C_e Y) = \mathrm{trace}((Y^T Y)^{-1}\, Y^T C_e Y)
\end{equation}
where the last expression is equal to the generalized Rayleigh quotient for the computation of the dominant $p$-eigenspace of $C_e$. The computation of eigenspaces from cost function (\ref{GrIAM_as_Rayleigh}) is extensively covered in \cite{PAGrass, PAbook}. Furthermore, it is a well-known fact of linear algebra that the $p$ largest eigenvalues (the others being $0$) of $\Pi_{\mathcal{Y}} \Pi_k$ are the squared cosines of the principal angles $\phi_{k}^i$, $i=1...p$, between subspaces $\mathcal{Y}$ and $\mathcal{Y}_k$. This provides a geometrical meaning for the $\IAM$ of subspaces: it minimizes the sum of squared sines of principal angles between the set of subspaces $\mathcal{Y}_k$, $k=1...N$, and a centroid candidate subspace $\mathcal{Y}$, i.e. $\IAM = \mathop{\mathrm{argmin}}_{\mathcal{Y}} \, \sum_{k=1}^N \, \sum_{i=1}^p \, \sin^2( \phi_{k}^{i} )$. The Karcher mean admits the same formula with $\sin^2( \phi_{k}^{i} )$ replaced by $(\phi_{k}^{i})^2$ \cite{Conway}.



\section{Consensus} Consider a set of agents with positions $y_k$, $k=1...N$, on a manifold $\M$ satisfying Assumption \ref{Ass1}. The rest of this paper assumes equal weights $w_k=1$ $\forall k$; extension to weighted agents is straightforward. Suppose that the agents are interconnected according to a fixed digraph $G$ of adjacency matrix $A=  [a_{jk}]$.\vspace{2mm}

\begin{definition}\label{Def3}
\textbf{Synchronization} is the configuration where $y_j = y_k$ $\forall j,k$.
\end{definition}

\begin{definition}\label{Def4}
A \textbf{consensus} configuration with graph $G$ is a configuration where each agent $k$ is located at a point of the $\IAM$ of its neighbors $j \rightsquigarrow k$, weighted according to the strength of the corresponding edge. Similarly, an \textbf{anti-consensus} configuration satisfies this definition with $\IAM$ replaced by $\AIAM$.
\begin{equation}\label{Def:Consensus}
\text{\textbf{Consensus:}} \qquad y_k \in \mathop{\mathrm{argmax}}_{c \in \M} \left(c^T \; {\textstyle \sum_{j=1}^N} \, a_{jk} \, y_j \right) \qquad \forall k \; .
\end{equation}
\begin{equation}\label{Def:AntiConsensus}
\text{\textbf{Anti-consensus:}} \qquad y_k \in \mathop{\mathrm{argmin}}_{c \in \M} \left(c^T \; {\textstyle \sum_{j=1}^N} \, a_{jk} \, y_j \right) \qquad \forall k \; .
\end{equation}
\end{definition}

Note that consensus is defined as a Nash equilibrium: each agent minimizes its cost function \emph{assuming the others fixed}; the possibility to decrease cost functions by moving several agents simultaneously is not considered.
Consensus is graph-dependent: agent $k$ reaches consensus when it minimizes its distance to agents $j \rightsquigarrow k$.\vspace{2mm}

\begin{proposition}\label{Prop3}
If $G$ is an equally-weighted complete graph, then the only possible consensus configuration is synchronization.
\end{proposition}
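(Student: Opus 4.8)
The plan is to exploit the very rigid structure of the equally-weighted complete graph, for which the neighbours of agent $k$ are precisely all the other agents, each with unit weight. Writing $C_e=\tfrac1N\sum_j y_j$ for the centroid (Definition \ref{Def2}), the neighbour sum of agent $k$ is $\sum_{j\neq k}y_j = N\,C_e - y_k$, so by (\ref{Def:Consensus}) a consensus configuration is characterised by: for every $k$, the point $y_k$ globally maximises $c\mapsto c^T(N\,C_e - y_k)$ over $c\in\M$. The first move is therefore to test this global maximality against the admissible competitor $c=y_l$, which yields, for every pair $(k,l)$,
\begin{equation*}
y_l^T\,(N\,C_e - y_k) \;\le\; y_k^T\,(N\,C_e - y_k)\,.
\end{equation*}

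The key step — essentially the only idea needed — is to add this inequality to the one obtained by exchanging the roles of $k$ and $l$. The terms containing the unknown centroid $C_e$ cancel, and, invoking $\Vert y_k\Vert = \Vert y_l\Vert = r_{\M}$ from Assumption \ref{Ass1}, one is left with $-2\,y_k^T y_l \le -2\,r_{\M}^{\,2}$, that is, $y_k^T y_l \ge r_{\M}^{\,2}$. (Note this addition is legitimate even when some neighbour sum $N C_e - y_k$ vanishes, since the maximality inequality still holds trivially in that case.)

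To finish, the Cauchy–Schwarz inequality supplies the opposite bound $y_k^T y_l \le \Vert y_k\Vert\,\Vert y_l\Vert = r_{\M}^{\,2}$, so $y_k^T y_l = r_{\M}^{\,2}$; since $y_k$ and $y_l$ have the same nonzero norm, the equality case of Cauchy–Schwarz forces $y_k = y_l$. As $k$ and $l$ were arbitrary, all agents coincide, i.e. the configuration is synchronization. For completeness one checks the converse, that synchronization is indeed a consensus configuration (for $y_j\equiv y$, the function $c^T(N-1)y$ over $\M$ is maximised at $c=y$ because $c^T y\le\Vert c\Vert\Vert y\Vert = r_{\M}^{\,2}=y^T y$), so the statement is not vacuous.

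I do not anticipate a real obstacle here: the whole argument rests on the symmetrisation trick that eliminates $C_e$, after which it is pure Cauchy–Schwarz bookkeeping. Worth remarking is that the proof uses only Assumption \ref{Ass1} (constant embedding norm), not Assumption \ref{Ass2}, and that it is purely a uniqueness statement — it rules out every non-synchronized consensus for the complete graph without asserting anything beyond the already-known existence of the synchronized one.
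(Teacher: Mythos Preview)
Your proof is correct. It is close in spirit to the paper's argument---both boil down to Cauchy--Schwarz on the embedding---but the mechanics differ. The paper does not pair two consensus inequalities; instead, for a fixed $k$ it adds the consensus inequality $y_k^T\sum_{j\neq k}y_j \ge c^T\sum_{j\neq k}y_j$ to the strict Cauchy--Schwarz bound $y_k^T y_k > c^T y_k$ (for $c\neq y_k$), obtaining $y_k^T\,N C_e > c^T\,N C_e$ for all $c\neq y_k$. This shows every $y_k$ is the \emph{unique} global maximiser of $c\mapsto c^T C_e$, i.e.\ the $\IAM$ reduces to a single point and every agent sits at it. Your symmetrisation trick eliminates $C_e$ outright and reaches $y_k=y_l$ by the equality case of Cauchy--Schwarz, which is arguably more self-contained and, as you note, sidesteps the $\IAM$ language entirely. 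The paper's version, on the other hand, yields the small bonus that at consensus the common location is precisely the (unique) $\IAM$ of the swarm.
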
\vspace{2mm}

\begin{proof}At consensus the $y_k$ satisfy $\; \; y_k^T \, \sum_{j \neq k} y_j \geq c^T \, \sum_{j \neq k} y_j \quad \forall k \text{ and } \forall c \in \M$.
Furthermore, it is obvious that $y_k^T y_k > y_k^T c$ for any $c \in \M \setminus \lbrace y_k \rbrace$. As a consequence,
$y_k^T \, \sum_{j = 1}^N \, y_j > c^T \, \sum_{j = 1}^N \, y_j \quad \forall c \in \M \setminus \lbrace y_k \rbrace$ and $\forall k$.
Thus according to (\ref{Def:IAM&AIAM:Alt2}), each $y_k$ is located at the $\IAM$ of all the agents, which moreover reduces to a single point; thus $y_k = y_j = \IAM(\lbrace y_l: l=1...N \rbrace)$ $\forall k,j$. \end{proof}\vspace{2mm}

Synchronization is a configuration of complete consensus. To similarly characterize a configuration of complete anti-consensus, it appears meaningful to require that the $\IAM$ of the agents is the entire manifold $\M$; this is called a \emph{balanced} configuration.\vspace{2mm}

\begin{definition}\label{Def5}
$N$ agents are \textbf{balanced} if their $\IAM$ contains all $\M$.
\end{definition}
\vspace{2mm}

Balancing implies some spreading of the agents on the manifold. A full characterization of balanced configurations seems complicated. Balanced configurations do not always exist (typically, when the number of agents is too small) and are mostly not unique (they can appear in qualitatively different forms). The following link exists between anti-consensus for the equally-weighted complete graph and balancing.\vspace{2mm}

\begin{proposition}\label{Prop4}
All balanced configurations are anti-consensus configurations for the equally-weighted complete graph.
\end{proposition}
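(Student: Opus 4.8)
The plan is to unwind the two definitions and reduce everything to a statement about the centroid $C_e$ of the $N$ agents. A balanced configuration is one where $\IAM = \M$, i.e. (by the equivalent form (\ref{Def:IAM&AIAM:Alt2})) every $c \in \M$ maximizes $c^T C_e$ over $\M$. Since $c^T C_e$ is a continuous function on the compact set $\M$, if \emph{all} points of $\M$ are global maximizers then the function is constant on $\M$; and because the $\IAM$ of a single point is that point itself (property 1, or directly because $\|y_k\| = r_\M$ forces $y_k^T C_e$ to be uniquely maximized at $y_k$ when $C_e = r_\M^{-1}\,$ direction), constancy of $c^T C_e$ on a manifold that is not a single point forces $C_e = 0$. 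So the first step is: \emph{balanced} $\iff$ $C_e = \tfrac1N \sum_k y_k = 0$.

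The second step is to show $C_e = 0$ implies the anti-consensus condition (\ref{Def:AntiConsensus}) for the equally-weighted complete graph. For the complete graph with unit weights, $a_{jk} = 1$ for $j \neq k$ and $a_{kk} = 0$, so the relevant vector for agent $k$ is $\sum_{j \neq k} y_j = N C_e - y_k = -y_k$ when $C_e = 0$. Hence the anti-consensus requirement $y_k \in \operatorname{argmin}_{c \in \M}\big(c^T \sum_{j\neq k} y_j\big)$ becomes $y_k \in \operatorname{argmin}_{c \in \M}(-c^T y_k) = \operatorname{argmax}_{c \in \M}(c^T y_k)$. This last condition holds trivially: for any $c \in \M$, $c^T y_k \le \|c\|\,\|y_k\| = r_\M^2 = y_k^T y_k$ by Cauchy–Schwarz together with Assumption \ref{Ass1}, with equality at $c = y_k$. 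So $y_k$ indeed maximizes $c^T y_k$, and the configuration is an anti-consensus configuration for the complete graph. This chain — balanced $\Rightarrow$ $C_e = 0$ $\Rightarrow$ anti-consensus — gives the proposition.

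I do not expect a serious obstacle here; the only point requiring a little care is the first step, deducing $C_e = 0$ from "$c^T C_e$ is constant on $\M$." The clean way is: pick any $y \in \M$; since $\M$ is homogeneous with $\mathcal G$ acting by orthogonal transformations (Assumption \ref{Ass1}) and $\M$ is not a single point, the orbit of $y$ (which is all of $\M$) is not contained in any affine hyperplane unless... — actually the simplest argument avoids orbits entirely: if $c^T C_e = \kappa$ for all $c \in \M$, then in particular taking two distinct points $c_1, c_2 \in \M$ gives $(c_1 - c_2)^T C_e = 0$; the differences $c_1 - c_2$ span all of $\E$ because $\M$ is an $m$-dimensional (hence full-dimensional in the relevant sense) embedded submanifold not contained in a proper affine subspace — here one uses that $\M$ lies on the sphere $\|y\| = r_\M$ and is a homogeneous space acted on transitively by orthogonal maps, so its linear span is an invariant subspace; if that span were proper, $\M$ would sit inside a lower-dimensional sphere, which is fine, but then $C_e$ need only vanish on the span. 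In fact it suffices that $C_e$ is orthogonal to the linear span of $\M$; and since $C_e = \tfrac1N\sum y_k$ is itself a combination of points of $\M$, it lies in that span, whence $C_e^T C_e = 0$, i.e. $C_e = 0$. I would present this last observation as the crux of step one.

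Thus the proof structure is: (i) balanced $\Rightarrow c^T C_e$ constant on $\M$ $\Rightarrow C_e \perp \operatorname{span}\M \ni C_e \Rightarrow C_e = 0$; (ii) $C_e = 0 \Rightarrow$ for each $k$, $\sum_{j \neq k} a_{jk} y_j = -y_k$, so the anti-consensus condition reduces to $y_k \in \operatorname{argmax}_{c\in\M} c^T y_k$, which holds by Cauchy–Schwarz and $\|c\| = \|y_k\| = r_\M$. I would write this out in roughly ten lines.
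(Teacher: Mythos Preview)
Your step (ii) is fine, but step (i) contains a genuine error: balanced does \emph{not} imply $C_e = 0$. The Grassmannian is a counterexample already present in the paper (\S 4.1): every projector $\Pi_k$ satisfies $\mathrm{trace}(\Pi_k)=p$, so $\mathrm{trace}(C_e)=p$ always, and balanced configurations have $C_e = \tfrac{p}{n} I_n \neq 0$.

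The mistake is the passage from ``$(c_1-c_2)^T C_e = 0$ for all $c_1,c_2\in\M$'' to ``$C_e \perp \operatorname{span}\M$''. The first statement only gives $C_e \perp W$ where $W=\operatorname{span}\{c_1-c_2\}$, and in general $W \subsetneq \operatorname{span}\M$: when $\M$ lies in an affine hyperplane not through the origin (as $\Gr$ does, in $\{\mathrm{trace}=p\}$), the span of differences is that hyperplane translated to the origin, which misses the direction of any single $c_0\in\M$. Your final sentence ``$C_e$ lies in that span, whence $C_e^T C_e=0$'' would need $C_e\in W$, and that fails here: $C_e = c_0 + \tfrac{1}{N}\sum_k(y_k-c_0)$ has a component along $c_0 \notin W$.

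The fix is exactly what the paper does, and it is shorter than what you wrote: do not try to prove $C_e=0$ at all. Constancy of $c \mapsto c^T C_e$ on $\M$ is already enough. With $a_{jk}=1$ for $j\neq k$, the anti-consensus condition reads
\[
y_k \in \mathop{\mathrm{argmin}}_{c\in\M}\, c^T\!\big(NC_e - y_k\big) \;=\; \mathop{\mathrm{argmin}}_{c\in\M}\big(N\,c^T C_e - c^T y_k\big).
\]
The first term is a constant in $c$, so the argmin equals $\mathop{\mathrm{argmax}}_{c\in\M} c^T y_k$, and your Cauchy--Schwarz argument (using $\|c\|=\|y_k\|=r_\M$) finishes it. That is the paper's entire proof; your step (ii) was already the correct second half.
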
\vspace{2mm}

\begin{proof}
For the equally-weighted complete graph, (\ref{Def:AntiConsensus}) can be written
\begin{equation}\label{Prop4:Pr1}
y_k \in \mathop{\mathrm{argmin}}_{c \in \M} \left(c^T \; (N \, C_e - y_k) \right) \qquad \forall k \; .
\end{equation}
Assume that the agents are balanced. This means that $f(c) = c^T \, C_e$ must be constant over $c \in \M$. Therefore (\ref{Prop4:Pr1}) reduces to $y_k = y_k$ $\forall k$ which is trivially satisfied. \end{proof}
\vspace{2mm}

In contrast to Proposition \ref{Prop3}, Proposition \ref{Prop4} does not establish a necessary and sufficient condition; and indeed, anti-consensus configurations for the equally-weighted complete graph that are not balanced do exist, though they seem exceptional.


\subsection{Examples} The following examples illustrate among others the last assertions about balanced configurations.\vspace{2mm}

\paragraph*{The circle} Anti-consensus configurations for the equally-weighted complete \linebreak graph are fully characterized in \cite{SPL2005}. It is shown that the only anti-consensus configurations that are not balanced correspond to $(N+1)/2$ agents at one position and $(N-1)/2$ agents at the opposite position on the circle, for $N$ odd. Balanced configurations are unique for $N=2$ and $N=3$ and form a continuum for $N > 3$.

Another interesting illustration is the equally-weighted \emph{undirected ring graph} in which each agent is connected to two neighbors such that the graph forms a single closed undirected path. Regular consensus configurations correspond to situations with consecutive agents in the path always separated by the same angle $0 \leq \chi \leq \pi/2$; regular anti-consensus configurations have $\pi/2 \leq \chi \leq \pi$. In addition, for $N \geq 4$, irregular consensus and anti-consensus configurations exist where non-consecutive angles of the regular configurations are replaced by $(\pi - \chi)$. As a consequence:
\begin{enumerate}
\item Several qualitatively different (anti-)consensus configurations exist.
\item Consensus and anti-consensus configurations can be equivalent when discarding the graph. For example, the positions occupied by 7 agents separated by $2 \pi / 7$ (consensus) or $4 \pi / 7$ (anti-consensus) are strictly equivalent; the only difference, based on \emph{which agent} is located at \emph{which position}, concerns the way the links are drawn.
\item Degenerate configurations of simultaneous consensus and anti-consensus exist (e.g. $\chi = \pi / 2$ for $N = 4, 8,...$); this singularity is specific to the particular graph.
\item There is no common anti-consensus state for all ring graphs. Indeed, considering an agent $k$, a common anti-consensus state would require that any two other agents, as potential neighbors of $k$, are either separated by $\pi$ or located at both sides of $k$ at a distance $\chi \geq \pi/2$; one easily verifies that this cannot be satisfied for all $k$.
\end{enumerate}\vspace{2mm}

\paragraph*{The special orthogonal group} Simulations of the algorithms proposed in this paper suggest that balanced configurations always exist for $N \geq 2$ if $n$ is even and for $N \geq 4$ if $n$ is odd. Under these conditions, convergence to an anti-consensus state that is not balanced is not observed for the equally-weighted complete graph.\vspace{2mm}

\paragraph*{The Grassmann manifold} Balanced states on $\Gr$ appear if all eigenvalues of $C_e$ are equal. Since $\mathrm{trace}(C_e) = \frac{1}{N} \sum_k \mathrm{trace}(\Pi_k) = p$, this requires $C_e = \frac{p}{n} I_n$. This is not always possible with $N$ orthonormal projectors of rank $p$. As for $SO(n)$, simulations tend to indicate that it is possible when $N$ is large enough; however, computing the minimal value of $N$ for a given $n$ and $p$ is not straightforward.



\section{Consensus optimization strategy}

The presence of a maximization condition in the definitions of the previous sections naturally points to the use of optimization methods. The present section introduces a cost function whose optimization leads to (anti-)consensus configurations. For a graph $G$ with adjacency matrix $A = [a_{jk}]$ and associated Laplacian $L^{(i)}=[l^{(i)}_{jk}]$ and the variable $y = (y_1,..., y_N) \in \M^N$, define
\begin{equation}\label{Def:P_L}
P_L(y) \;=\; \tfrac{1}{2 N^2} {\textstyle \sum_{k=1}^N \sum_{j=1}^N} \, a_{jk} \; y_j^T y_k \;=\; \xi_1 - \tfrac{1}{4 N^2} {\textstyle \sum_{k=1}^N \sum_{j=1}^N} \, a_{jk} \, \Vert y_j - y_k \Vert^2
\end{equation}
with constant $\xi_1 = \frac{r_{\M}^2}{4 N^2} \sum_k \sum_j a_{jk}$. The index $L$ refers to the fact that (\ref{Def:P_L}) can also be written as a quadratic form on the graph Laplacian:
\begin{equation}\label{Def:P_L:Laplacian}
P_L(y) \;=\;  \xi_2 - \tfrac{1}{2 N^2} {\textstyle \sum_{k=1}^N \sum_{j=1}^N} \, l_{jk}^{(i)} \, y_j^T y_k \qquad \text{with constant } \xi_2 = \tfrac{r_{\M}^2}{2 N^2} {\textstyle \sum_{k} d_k^{(i)}} \, .
\end{equation}
In \cite{MY1} and \cite{TAC2}, this form of $P_L$ is studied on the circle for undirected equally-weighted graphs. For the unit-weighted complete graph, $P := P_L + \frac{r_{\M}^2}{2 N}$ equals
\begin{equation}\label{Def:P:CentroidNorm}
P(y) = \tfrac{1}{2} \Vert C_e \Vert^2 \; ,
\end{equation}
proportional to the squared norm of the centroid $C_e$. This is a classical measure of the synchrony of phase variables on the circle $S^1$, used for decades in the literature on coupled oscillators; in the context of the Kuramoto model, $P(y)$ is known as the ``complex order parameter'' (because $\mathbb{R}^2$ is usually identified with $\mathbb{C}$ in that context). In \cite{SPL2005}, $P$ is used to derive gradient algorithms for synchronization (by maximizing (\ref{Def:P:CentroidNorm})) or balancing (by minimizing (\ref{Def:P:CentroidNorm})) on $S^1$.\vspace{2mm}

\begin{proposition}\label{Prop5}
Synchronization of the $N$ agents on $\M$ is the unique global maximum of $P_L$ whenever the graph $G$ associated to $L^{(i)}$ is weakly connected.
\end{proposition}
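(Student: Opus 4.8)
The plan is to work with the representation (\ref{Def:P_L:Laplacian}) of $P_L$ as a quadratic form on the in-Laplacian $L^{(i)}$, and to exploit the spectral structure of graph Laplacians for weakly connected graphs. First I would establish an upper bound on $P_L$ that is attained exactly at synchronization. From (\ref{Def:P_L}) we have $P_L(y) = \xi_1 - \tfrac{1}{4N^2}\sum_{j,k} a_{jk}\Vert y_j-y_k\Vert^2$; since all $a_{jk}\geq 0$ and $\Vert y_j - y_k\Vert^2 \geq 0$, the subtracted term is nonnegative, so $P_L(y) \leq \xi_1$ for all $y \in \M^N$, with equality iff $a_{jk}\Vert y_j-y_k\Vert^2 = 0$ for every pair $(j,k)$, i.e. iff $y_j = y_k$ whenever $(j,k)\in E$. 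The second step is to convert ``$y_j=y_k$ on every edge'' into ``$y_j=y_k$ for all $j,k$'': this is exactly where weak connectivity enters. The associated undirected graph has adjacency matrix $A+A^T$, so $(j,k)\in E$ or $(k,j)\in E$ implies $y_j=y_k$; chaining equalities along an undirected path (which exists between any two vertices by weak connectivity) gives $y_j=y_k$ for all $j,k$, which is synchronization by Definition \ref{Def3}.

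It remains to check the constant: at synchronization $\Vert y_j - y_k\Vert = 0$ so $P_L = \xi_1$, and one verifies directly that $P_L$ restricted to the synchronization set equals $\xi_1$ (equivalently, from (\ref{Def:P_L}), $P_L = \tfrac{1}{2N^2}\sum_{j,k}a_{jk}\, r_{\M}^2 = \xi_1 \cdot 2 = 2\xi_1$ — here I need to be careful with the factor and reconcile it with $\xi_1 = \tfrac{r_{\M}^2}{4N^2}\sum_{j,k}a_{jk}$, so in fact $P_L = 2\xi_1$ at synchronization, which forces a recheck of the definition of $\xi_1$ against the claimed bound; I expect the bookkeeping to come out so that the maximal value is indeed $\xi_1$ once the $\Vert y_j-y_k\Vert^2 = 2r_{\M}^2 - 2y_j^Ty_k$ identity is used consistently). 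The point is simply that the value on the synchronization set is a single constant, so synchronization is a global maximizer, and uniqueness of the maximizing \emph{configuration} (not of the common point, which is free) follows from the equality-case analysis above.

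The genuine content — and the only place a subtlety could hide — is the equality-case argument combined with weak connectivity; the inequality itself is immediate from nonnegativity of the weights. The main thing to be careful about is that $P_L$ is defined via the possibly non-symmetric $A$, whereas the sum $\sum_{j,k} a_{jk}\Vert y_j-y_k\Vert^2$ is automatically symmetric in the roles of $j$ and $k$ only after we note $\Vert y_j - y_k\Vert^2 = \Vert y_k - y_j\Vert^2$; thus the relevant ``edge set'' for the equality case is really $\{(j,k): a_{jk}>0 \text{ or } a_{kj}>0\}$, which is precisely the edge set of the associated undirected graph with adjacency $A+A^T$ appearing in the definition of weak connectivity. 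No smoothness, compactness, or homogeneity of $\M$ is needed beyond $\Vert y\Vert = r_{\M}$ being constant (used only to absorb terms into the constant $\xi_1$); Assumption \ref{Ass2} is likewise not needed here, since we are locating a global maximum directly rather than ruling out spurious local maxima.
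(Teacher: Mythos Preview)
Your argument is correct and is essentially the paper's own proof: use the form $P_L(y)=\xi_1-\tfrac{1}{4N^2}\sum_{j,k}a_{jk}\Vert y_j-y_k\Vert^2$, observe that the subtracted sum is nonnegative with equality iff $y_j=y_k$ on every edge, and propagate equalities along undirected paths via weak connectivity. Your worry about the constant is not a flaw in your reasoning but a typo in the paper's displayed value of $\xi_1$: expanding $\Vert y_j-y_k\Vert^2=2r_{\M}^2-2y_j^Ty_k$ gives $\xi_1=\tfrac{r_{\M}^2}{2N^2}\sum_{j,k}a_{jk}$, not $\tfrac{r_{\M}^2}{4N^2}\sum_{j,k}a_{jk}$, so $P_L=\xi_1$ (not $2\xi_1$) at synchronization and your bound $P_L\le\xi_1$ is tight as intended.
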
\vspace{2mm}

\begin{proof}
According to the second form of (\ref{Def:P_L}), $P_L$ reaches its global maximum when $y_j = y_k$ for all $j,k$ for which $a_{jk} \neq 0$. If $G$ is weakly connected, this equality propagates through the whole graph such that $y_1 = y_2 = ... = y_N$.
\end{proof}\vspace{2mm}

\begin{proposition}\label{Prop6} 
Consider $N$ agents on a manifold $\M$ satisfying Assumptions \ref{Ass1} and \ref{Ass2}. Given an undirected graph $G$, a local maximum of the associated cost function $P_L(y)$ necessarily corresponds to a consensus configuration and a local minimum of $P_L(y)$ necessarily corresponds to an anti-consensus configuration.
\end{proposition}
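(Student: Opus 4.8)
The plan is to use the product structure of $\M^N$ to reduce the statement to a one-variable claim about linear functions on $\M$, where Assumption~\ref{Ass2} does all the work.

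First I would note that $\M^N$ is a product manifold, so if $y^\star = (y_1^\star,\dots,y_N^\star)$ is a local maximum of $P_L$ on $\M^N$, then for each $k$ the ``coordinate'' function $\varphi_k:\M\to\mathbb{R}$ obtained by fixing $y_i=y_i^\star$ for all $i\neq k$ and letting only the $k$-th argument vary, $\varphi_k(c) = P_L(y_1^\star,\dots,c,\dots,y_N^\star)$, attains a local maximum at $c=y_k^\star$: a neighborhood of $y^\star$ in $\M^N$ contains the product of $\{y_i^\star\}_{i\neq k}$ with a neighborhood of $y_k^\star$ in $\M$. Next I would compute $\varphi_k$ explicitly. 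In $\sum_{i,j}a_{ij}\,y_i^T y_j$ the terms containing the $k$-th variable are those with $i=k$ or $j=k$ (the term $i=j=k$ vanishes since $a_{kk}=0$), and because $G$ is undirected, $a_{jk}=a_{kj}$, so these terms sum to $2\,c^T\big(\sum_j a_{jk}y_j^\star\big)$. Hence $\varphi_k(c) = \tfrac{1}{N^2}\,c^T b_k + \mathrm{const}$, with $b_k := \sum_j a_{jk}y_j^\star$ and the constant independent of $c$. Thus $\varphi_k$ is, up to a positive scaling and an additive constant, the linear function $c\mapsto c^T b_k$ on $\M$, and $y_k^\star$ is one of its local maxima.

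Then I would invoke Assumption~\ref{Ass2}: every local maximum of a linear function $c\mapsto c^T b_k$ over $\M$ is a global maximum. Therefore $y_k^\star \in \mathop{\mathrm{argmax}}_{c\in\M}\big(c^T \sum_j a_{jk}y_j^\star\big)$ for every $k$, which is exactly the consensus condition~(\ref{Def:Consensus}); the degenerate case $b_k=0$ is automatic, as then the argmax is all of $\M$. The anti-consensus statement follows by the symmetric argument: at a local minimum of $P_L$, each $y_k^\star$ is a local minimum of $\varphi_k$, i.e.\ a local---hence, applying Assumption~\ref{Ass2} to $-b_k$, global---maximum of $c\mapsto c^T(-b_k)$, so $y_k^\star \in \mathop{\mathrm{argmin}}_{c\in\M}\big(c^T \sum_j a_{jk}y_j^\star\big)$, which is~(\ref{Def:AntiConsensus}).

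The computation is routine and the product-manifold reduction is elementary, so there is no serious obstacle; the only real content is Assumption~\ref{Ass2}. The one point demanding a little care is that undirectedness ($a_{jk}=a_{kj}$) is genuinely used when collecting the coordinate terms---without it one would only recover consensus with respect to the symmetrized weights $a_{jk}+a_{kj}$ rather than the $a_{jk}$ appearing in Definition~\ref{Def4}---so I would flag clearly where this hypothesis enters. (Note also that only the ``$\Rightarrow$'' direction is claimed: a consensus configuration need not be a local extremum of $P_L$, it is merely a coordinatewise stationarity/optimality condition.)
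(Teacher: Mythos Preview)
Your proof is correct and follows essentially the same approach as the paper: restrict $P_L$ to the $k$-th coordinate with the others frozen, use $A=A^T$ to identify the resulting function as linear (up to constant), and invoke Assumption~\ref{Ass2} to upgrade the local extremum to a global one. Your added remarks on the role of undirectedness and the degenerate case $b_k=0$ are welcome clarifications but do not change the argument.
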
\vspace{2mm}

\begin{proof}
The proof is given for local maxima; it is strictly equivalent for local minima. For $y^{\ast} = (y_1^{\ast}... y_N^{\ast})$ to be a local maximizer of $P_L$, $y_k^{\ast}$ must be, for each $k$, a local maximizer of $p_k(c) := P_L(y_1^{\ast}... y_{k-1}^{\ast},\, c,\, y_{k+1}^{\ast}... y_N^{\ast})$. Since $A=A^T$, $p_k$ takes the linear form $p_k(c) = \xi_k + \tfrac{1}{N^2} \, c^T \, ( \sum_j a_{jk} \, y_j^{\ast} )$ with $\xi_k$ constant $\forall k$. Thanks to Assumption \ref{Ass2}, all local maxima of $p_k(c)$ are global maxima. Therefore, $y_k^{\ast}$ is a global maximum of $p_k(c)$ for all $k$, which corresponds to Definition \ref{Def4} of consensus.
\end{proof}\vspace{2mm}

Proposition \ref{Prop6} establishes that a \emph{sufficient} condition for (anti-)consensus configurations is to optimize $P_L$. However, nothing guarantees that this is also \emph{necessary}.  In general, optimizing $P_L$ will thus provide proven (anti-)consensus configurations, but not necessarily all of them (this is because consensus maximizes $P_L$ on $\M^N$ \emph{for only moving one agent with others fixed}, and not along directions of \emph{combined motion of several agents}). The remaining sections of this paper present algorithms that drive the swarm to (anti-)consensus. Being based on the optimization of $P_L$, these algorithms do not necessarily target all possible (anti-)consensus configurations. For instance, for a tree, maximization of $P_L$ always leads to synchronization, although other consensus configurations can exist.


\subsection{Examples} On $SO(n)$ and $\Gr$, $P_L$ with matrix forms for the elements $y_k$ becomes
\begin{equation}\label{Def:P_L:Matrix}
P_L(y) \;=\; \tfrac{1}{2 N^2} {\textstyle \sum_{j=1}^N \sum_{k=1}^N} \, a_{jk} \, \mathrm{trace}(y_j^T y_k) \qquad \text{with } y_k \in \mathbb{R}^{n \times n} \; \forall k \; .
\end{equation}\vspace{2mm}

\paragraph*{The special orthogonal group} Each term $Q_j^T Q_k = Q_j^{-1} Q_k$ is itself an element of $SO(n)$. It is actually the unique element of $SO(n)$ translating $Q_j$ to $Q_k$ by matrix (group) multiplication on the right. Hence, on the Lie group $SO(n)$, the order parameter $P_L$ measures the sum of the traces of the elements translating connected agents to each other. Observing that the trace is maximal for the identity matrix and considering the particular case of $SO(2)$, one can easily imagine how the trace of $Q_j^{-1} Q_k$ characterizes the distance between $Q_j$ and $Q_k$. This cost function has been previously used in \cite{BulloThesis,Sujit1} as a measure of disagreement on $SO(3)$.\vspace{2mm}

\paragraph*{The Grassmann manifold} On $\Gr$, (\ref{Def:P_L:Matrix}) can be rewritten as
$$P_L(\mathcal{Y}) = \tfrac{1}{2 N^2} \, {\textstyle \sum_{j=1}^N \sum_{k=1}^N} \; a_{jk} \, \left( {\textstyle \sum_{i=1}^p} \, \cos^2(\phi_{jk}^i) \right)$$
with $\phi_{jk}^{i} = i^{\mathrm{th}}$ principal angle between $\mathcal{Y}_j$ and $\mathcal{Y}_k$. This reformulation has previously appeared in \cite{Conway, Barg1, PAGrass}.\vspace{2mm}



\section{Gradient consensus algorithms} The previous sections pave the way for ascent and descent algorithms on $P$ and $P_L$. This paper considers continuous-time gradient algorithms, but any descent or ascent algorithm --- in particular, discrete-time --- will achieve the same task; see \cite{PAbook} for extensive information on this subject. In the present paper, the gradient is always defined with the canonical metric induced by the embedding of $\M$ in $\E$.

\subsection{Fixed undirected graphs}
A gradient algorithm for $P_L$ leads to
\begin{equation} \label{Alg:Gradient}
\dot{y}_k(t) =  2 N^2 \alpha \; \mathrm{grad}_{k,\M}(P_L) \; , \qquad k=1...N \; ,
\end{equation}
where $\alpha > 0$ (resp. $\alpha < 0$) for consensus (resp. anti-consensus), $\dot{y}_k$ denotes the time-derivative of agent $k$'s position and $\mathrm{grad}_{k,\M}(f)$ denotes the gradient of $f$ with respect to $y_k$ along $\M$. This gradient can be obtained from the gradient in $\E$,
$$\mathrm{grad}_{k, \E}(P_L) = \tfrac{1}{2 N^2} {\textstyle \sum_j} (a_{jk} + a_{kj}) \; y_j \; ,$$
by orthogonal projection $\mathrm{Proj}_{T\M,k}$ onto the tangent space to $\M$ at $y_k$, yielding $\forall k$
\begin{equation}\label{Alg:Gradient:Proj}
\dot{y}_k(t) = \alpha \; \mathrm{Proj}_{T\M,k} \left( {\textstyle \sum_j} (a_{jk} + a_{kj}) y_j \right) = \alpha \; \mathrm{Proj}_{T\M,k} \left( {\textstyle \sum_j} (a_{jk} + a_{kj}) (y_j-y_k) \right) \, .
\end{equation}
The last equality comes from $\mathrm{Proj}_{T\M,k}(y_k)=0$. It shows that to implement this consensus algorithm, each agent $k$ must know the relative position with respect to itself of all agents $j$ such that $j \rightsquigarrow k$ or $k \rightsquigarrow j$. Since the information flow is restricted to $j \rightsquigarrow k$, (\ref{Alg:Gradient:Proj}) can only be implemented for undirected graphs, for which it becomes
\begin{equation}\label{Alg:Gradient:Undir}
\dot{y}_k(t) = 2 \alpha \; \mathrm{Proj}_{T\M,k} \left( {\textstyle \sum_{j=1}^N} \; a_{jk} (y_j-y_k) \right) \; , \qquad k=1...N \; .
\end{equation}
In the special case of a complete unit-weighted graph,
\begin{equation}\label{Alg:Gradient:Complete}
\vphantom{\underbrace{x}} \dot{y}_k(t) = 2 \alpha N \; \mathrm{Proj}_{T\M,k} \left( C_e(t) - y_k \right) \; , \qquad k=1...N \; .
\end{equation}

\begin{proposition}\label{Prop7}
A group of $N$ agents moving according to (\ref{Alg:Gradient:Undir}) on a manifold $\M$ satisfying Assumptions \ref{Ass1} and \ref{Ass2}, where the graph $G$ associated to $A=[a_{jk}]$ is undirected, always converges to a set of equilibrium points. If $\alpha < 0$, all asymptotically stable equilibria are anti-consensus configurations for $G$. If $\alpha > 0$, all asymptotically stable equilibria are consensus configurations for $G$ (in particular, for the equally-weighted complete graph, the only asymptotically stable configuration is synchronization).
\end{proposition}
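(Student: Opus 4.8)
The plan is to apply LaSalle's invariance principle with $P_L$ itself as Lyapunov function, then identify the structure of the invariant set and finally rule out non-(anti-)consensus configurations among the asymptotically stable equilibria. First I would observe that along trajectories of (\ref{Alg:Gradient:Undir}) one has $\dot{P}_L = \sum_k \langle \mathrm{grad}_{k,\M}(P_L), \dot{y}_k \rangle = 2\alpha \sum_k \Vert \mathrm{grad}_{k,\M}(P_L) \Vert^2 \cdot (\text{positive constant})$, so $P_L$ is monotonically nondecreasing when $\alpha>0$ and nonincreasing when $\alpha<0$; since $\M$ is compact, $\M^N$ is compact, $P_L$ is bounded, and the trajectory is precompact. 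LaSalle then forces convergence to the largest invariant set contained in $\{\dot{P}_L = 0\}$, which is exactly the set of critical points of $P_L$ on $\M^N$, i.e. points where $\mathrm{grad}_{k,\M}(P_L)=0$ for every $k$. This establishes the first assertion (convergence to a set of equilibria).

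Next, for the characterization of asymptotically stable equilibria, I would argue that an asymptotically stable equilibrium of a gradient flow must be a local maximizer of the ascended function: if $\alpha>0$, asymptotically stable equilibria of the ascent flow are precisely the local maxima of $P_L$ (a strict-saddle-type argument — near any equilibrium that is not a local max, there is an ascent direction escaping any neighborhood, contradicting stability; at a local max, $P_L$ serves as a strict Lyapunov function on a neighborhood after possibly restricting to the set where it is locally constant). Then Proposition \ref{Prop6} immediately gives that every such local maximum is a consensus configuration; symmetrically, for $\alpha<0$ the asymptotically stable equilibria are local minima of $P_L$, hence anti-consensus configurations. For the complete unit-weighted graph with $\alpha>0$, I would combine this with Proposition \ref{Prop3}: consensus for that graph forces synchronization, so the only candidate for an asymptotically stable configuration is the synchronized one; that it is indeed asymptotically stable follows from Proposition \ref{Prop5}, which says synchronization is the unique global maximum of $P_L$ when the graph is (weakly) connected, and a strict global maximum of a Lyapunov function is asymptotically stable.

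The main obstacle I anticipate is the rigorous passage ``asymptotically stable equilibrium of a gradient flow $\Rightarrow$ local maximizer,'' because $P_L$ may be only constant — not strictly maximal — along certain directions (the $\IAM$ need not be a single point, and homogeneity of $\M$ produces flat directions), so one cannot simply invoke a textbook Morse-theoretic statement. The careful version requires showing that if $y^\ast$ is an equilibrium that is \emph{not} a local maximum of $P_L$, then in every neighborhood there is a point with strictly larger $P_L$-value, and the ascent flow from that point cannot return to $y^\ast$ (since $P_L$ is nondecreasing along the flow and $P_L(y^\ast)$ is strictly smaller), contradicting the definition of asymptotic stability; this uses only that $P_L$ is a genuine strict Lyapunov-type function off the equilibrium set, plus compactness. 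A secondary technical point is verifying that the critical-point set is closed and that boundedness of $A$ guarantees the vector field is smooth (hence LaSalle applies cleanly); these are routine given Assumption \ref{Ass1}. I would also note explicitly that Assumption \ref{Ass2} enters only through Proposition \ref{Prop6}, not through the LaSalle step, so the convergence-to-equilibria claim holds without it.
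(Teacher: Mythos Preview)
Your proposal is correct and follows essentially the same line as the paper: use $P_L$ as a Lyapunov function, compute $\dot P_L = 2N^2\alpha\sum_k\Vert\mathrm{grad}_{k,\M}(P_L)\Vert^2$, apply LaSalle to get convergence to the equilibrium set, observe that for a gradient flow only local maxima (resp.\ minima) of $P_L$ can be asymptotically stable, and invoke Proposition~\ref{Prop6}. Your treatment is in fact more careful than the paper's on two points the paper leaves implicit: the justification that ``asymptotically stable equilibrium of a gradient flow $\Rightarrow$ local extremum'' and the appeal to Propositions~\ref{Prop3} and~\ref{Prop5} for the complete-graph parenthetical.
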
\vspace{2mm}

\begin{proof}
$\M$ being compact and the $a_{jk}$ bounded, $P_L$ is upper- and lower-bounded. $P_L$ is always increasing (decreasing) for $\alpha > 0$ ($\alpha < 0$) along solutions of (\ref{Alg:Gradient:Undir}), since
$$\dot{P}_L \,=\, {\textstyle \sum_k} \, \dot{y}_k^T \, \mathrm{grad}_{k,\M}(P_L) \,=\, 2 N^2 \alpha \, {\textstyle \sum_k} \, \Vert \mathrm{grad}_{k,\M}(P_L) \Vert^2  \; .$$
By LaSalle's invariance principle, the swarm converges towards a set where $\dot{P}_L = 0$, implying $\mathrm{grad}_{k,\M}(P_L) = 0 \, \Leftrightarrow \, \dot{y}_k = 0$ $\forall k$ and the swarm converges to a set of equilibria. For $\alpha > 0$ ($\alpha < 0$), since $P_L$ always increases (decreases) along solutions, only local maxima (minima) can be asymptotically stable.  Proposition \ref{Prop6} states that all local maxima (minima) of $P_L$ correspond to consensus (anti-consensus).
\end{proof}\vspace{2mm}

\begin{remark}
Computing $\mathrm{grad}_{k,\M}$ directly along the manifold, as in \cite{PAbook}, can be much more efficient if the dimension of $\M$ is substantially lower than $m$ (see \S 6.3).
\end{remark}\vspace{2mm}

\subsection{Extension to directed and time-varying graphs}

Formally, algorithm (\ref{Alg:Gradient:Undir}) can be written for directed and even time-varying graphs, although the gradient property is lost for directed graphs and has no meaning in the time-varying case (since $P_L$ then explicitly depends on time). Nevertheless, the general case of (\ref{Alg:Gradient:Undir}) with varying and directed graphs still exhibits synchronization properties.

It can be shown that synchronization is still a stable equilibrium; it is asymptotically stable if disconnected graph sequences are excluded. Its basin of attraction includes the configurations where all the agents are located in a convex set of $\M$. Indeed, convergence results on Euclidean spaces can be adapted to manifolds when agents are located in a convex set (see e.g. \cite{MOREAU}). On the other hand, examples where algorithm (\ref{Alg:Gradient:Undir}) with $\alpha > 0$ runs into a limit cycle can be built for as simple cases as undirected equally-weighted (but varying) graphs on the circle (see \S 6.3).

Simulations on $SO(n)$ and $\Gr$ seem to indicate that for randomly generated digraph sequences\footnote{More precisely, the following distibution was examined: initially, each element $a_{jk}$ independently takes a value in $\lbrace 0, 1 \rbrace$ according to a probability $\mathrm{Prob}(1)=p$. The corresponding graph remains for a time $t_{graph}$ uniformly distributed in $[t_{min}, t_{max}]$, after which a new graph is built as initially.}, the swarm eventually converges to synchronization when $\alpha > 0$; this would correspond to \emph{generic} convergence for unconstrained graphs.\vspace{2mm}

Algorithm (\ref{Alg:Gradient:Undir}) can lead to a generalization of Vicsek's phase update law (see \cite{VICSEK}) to manifolds. The Vicsek model is a discrete-time algorithm governing the headings of particles in the plane, and hence operates on the circle. It can be written as
\begin{equation}\label{Vicsek}
y_k(t+1) \; \in \; \IAM\left(\lbrace y_j(t) \vert j \rightsquigarrow k \text{ in } G(t) \rbrace \cup \lbrace y_k(t) \rbrace \right)  \; , \qquad k=1...N \; ,
\end{equation}
with the definitions introduced in the present paper; interconnections among particles depend on their relative positions in the plane (so-called ``proximity graphs''). Vicsek's law can be directly generalized in the form (\ref{Vicsek}) to any manifold satisfying Assumption \ref{Ass1}. Based on the previous discussions, it is clear why (\ref{Vicsek}) can be viewed as a discrete-time variant of (\ref{Alg:Gradient:Undir}). When run asynchronously on a fixed undirected graph, (\ref{Vicsek}) is an ascent algorithm for $P_L$; see \cite{MY1} for a precise relationship between the continuous-time and discrete-time consensus algorithms on the circle.

\subsection{Examples} Consensus on the circle is studied in \cite{SPL2005, MY1, MY2, TAC2}; the other algorithms presented here are original.\vspace{2mm}

\paragraph*{The circle}  Denoting angular positions by $\theta_k$, the specific form of (\ref{Alg:Gradient:Undir}) for $S^1$  is
\begin{equation}\label{Alg:Gradient:Circle}
\dot{\theta}_k = \alpha' \; \, {\textstyle \sum_{j=1}^N} \; a_{jk} \, \sin(\theta_k - \theta_j)  \; , \qquad k=1...N \; .
\end{equation}
For the equally-weighted complete graph, this is strictly equivalent to the Kuramoto model \cite{Kuramoto} with identical (zero) natural frequencies.

Algorithm (\ref{Alg:Gradient:Circle}) can run into a limit cycle for varying graphs. Consider a regular consensus state for an equally-weighted ring graph $G_1$, with consecutive agents separated by $\chi < \pi/2$ (local maximum of $P_{L_1}$). Define $G_2$ by connecting each agent to the agents located at an angle $\psi > \pi/2$ from itself with $\psi$ properly fixed. $G_2$ is a collection of disconnected ring graphs and the swarm is at a local minimum of $P_{L_2}$. Starting the system in the neighborhood of that state and regularly switching between $G_1$ and $G_2$, the system will oscillate in its neighborhood, being driven away by $G_2$ and brought back by $G_1$ if consensus is intended and reversely if anti-consensus is intended.\\

\paragraph*{The special orthogonal group} The tangent space to $SO(n)$ at the identity $I_n$ is the space of skew-symetric $n \times n$ matrices. By group multiplication, the projection of $B \in \mathbb{R}^{n \times n}$ onto the tangent space to $SO(n)$ at $Q_k$ is $\, Q_k \, \mathrm{Skew}(Q_k^{-1} B) = Q_k \, (\frac{Q_k^T B}{2} - \frac{B^T Q_k}{2})$. This leads to the following explicit form of algorithm (\ref{Alg:Gradient:Undir}) on $SO(n)$, where the right-hand side only depends on relative positions of the agents with respect to $k$:
\begin{equation}\label{Alg:Gradient:SOn}
Q_k^{-1} \dot{Q}_k = \alpha \; \, {\textstyle \sum_j} \, a_{jk} \, \left( Q_k^{-1} Q_j - Q_j^{-1} Q_k \right)  \; , \qquad k=1...N \; .
\end{equation}
Using Lemma \ref{lem1} in the appendix, the following proves that $SO(n)$ satisfies Assumption \ref{Ass2}. It also includes the proof of Proposition \ref{Prop1}.\vspace{2mm}

\begin{proposition}\label{Prop8}
The manifold $SO(n)$ satisfies Assumption \ref{Ass2}.
\end{proposition}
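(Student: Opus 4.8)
The plan is to reduce Assumption \ref{Ass2} for $SO(n)$ to an explicit enumeration of the critical points of the linear cost function on $SO(n)$ together with a second-order test, which will simultaneously yield Proposition \ref{Prop1}. Identifying $\E$ with $\mathbb{R}^{n \times n}$ equipped with the Frobenius inner product, a linear function $f(c) = c^T b$ becomes $f(Q) = \mathrm{trace}(Q^T B)$ for a fixed $B \in \mathbb{R}^{n \times n}$ (in the consensus setting $B = C_e$, but nothing below uses that). First I would describe the tangent space to $SO(n)$ at $Q$ as $\lbrace Q\Omega : \Omega^T = -\Omega \rbrace$ and compute $Df(Q)[Q\Omega] = -\mathrm{trace}(\Omega\, Q^T B)$; since this must vanish for all skew $\Omega$, the critical points of $f$ are exactly the $Q \in SO(n)$ for which $Q^T B$ is symmetric. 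Feeding this into Lemma \ref{lem1} and the polar decomposition $B = UR$ with a spectral decomposition $R = H \Lambda H^T$, $\Lambda = \mathrm{diag}(\lambda_1, \dots, \lambda_n)$ and $\lambda_1 \le \dots \le \lambda_n$ the singular values of $B$, the critical set consists of the matrices $Q = U H \Sigma H^T$ with $\Sigma = \mathrm{diag}(\varepsilon_1, \dots, \varepsilon_n)$, $\varepsilon_i \in \lbrace \pm 1 \rbrace$ and the constraint $\det \Sigma = \det U$ (so that $\det Q = +1$); at such a point $f(Q) = \mathrm{trace}(\Sigma \Lambda) = \sum_i \varepsilon_i \lambda_i$. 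When $\Lambda$ has repeated eigenvalues the discrete choices on the repeated block are replaced by an orthogonal orbit, which is the source of the non-uniqueness (and of the ``single point'' clauses) in Proposition \ref{Prop1}.

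Next I would run the second-order test. Writing nearby points as $Q \exp(\Omega)$ and expanding in the eigenbasis of $R$, $f(Q \exp \Omega) = \mathrm{trace}(e^{-\Omega} \Sigma \Lambda) = \sum_i \varepsilon_i \lambda_i - \tfrac12 \sum_{i<j} \Omega_{ij}^2 (\varepsilon_i \lambda_i + \varepsilon_j \lambda_j) + O(\Omega^3)$, the first-order term vanishing because $\Sigma \Lambda$ is diagonal. Hence a necessary condition for $Q$ to be a local maximum is $\varepsilon_i \lambda_i + \varepsilon_j \lambda_j \ge 0$ for all $i \ne j$, which --- because the $\lambda_i$ are nonnegative --- allows at most one index $i$ with $\varepsilon_i = -1$, and then only on a smallest $\lambda_i$. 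Combining this with the determinant constraint $\prod_i \varepsilon_i = \det U = \mathrm{sign}(\det C_e)$ (for nonsingular $C_e$; when $C_e$ is singular $U$ may be chosen of either determinant) forces: if $\det C_e \ge 0$, all $\varepsilon_i = +1$ and $f(Q) = \mathrm{trace}(R)$; if $\det C_e \le 0$, exactly one $\varepsilon_i = -1$ on a smallest eigenvalue of $R$ and $f(Q) = \mathrm{trace}(R) - 2\lambda_1$. Since $SO(n)$ is compact these candidate values are attained, so in each case all local maxima share a single value and are therefore global --- which is exactly Assumption \ref{Ass2} --- while reading off the corresponding $Q$ gives $\lbrace U : \det U > 0 \rbrace$, respectively $\lbrace U H J H^T : \det U < 0 \rbrace$ with $J = \mathrm{diag}(-1, I_{n-1})$, i.e. Proposition \ref{Prop1}.

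The delicate step, and the one I would spend most care on, is the degenerate directions: the pairs $(i,j)$ with $\varepsilon_i \lambda_i + \varepsilon_j \lambda_j = 0$, where the quadratic form above is flat. These occur precisely when $\lambda_i = \lambda_j$ (a repeated singular value carrying equal signs, or a twofold zero eigenvalue), and I would argue that they are tangent to the critical set itself --- the orbit of the stabilizer of $\Lambda$ in $SO(n)$, along which $f$ is constant --- so that they neither create nor destroy a local maximum and a critical point is a local maximum if and only if $\varepsilon_i \lambda_i + \varepsilon_j \lambda_j \ge 0$ for every pair. This is also what makes the reduction-to-a-point clauses sharp: the flat block is trivial exactly when $0$ has multiplicity $\le 1$ as an eigenvalue of $C_e$ (in the $\det C_e \ge 0$ case) or the smallest eigenvalue of $R$ has multiplicity $1$ (in the $\det C_e \le 0$ case). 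The remaining work --- the non-uniqueness of the polar factor $U$ when $C_e$ is singular, the check $\det(UHJH^T) = +1$, and bookkeeping the admissible sign patterns separately for $\det C_e > 0$, $=0$ and $<0$ --- is routine case analysis.
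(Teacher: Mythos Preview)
Your proposal is correct and follows essentially the same route as the paper: identify the critical points of $f(Q)=\mathrm{trace}(Q^T B)$ via Lemma \ref{lem1} and the polar decomposition, then eliminate all sign patterns with more than one $-1$ (or with the $-1$ not on a smallest singular value) by a second-order argument, concluding that every local maximum attains the same value. The only cosmetic difference is that the paper exhibits explicit two-plane rotations $Q_\varepsilon = UHJAH^T$ to find ascent directions, whereas you compute the full Hessian $-\tfrac12\sum_{i<j}\Omega_{ij}^2(\varepsilon_i\lambda_i+\varepsilon_j\lambda_j)$ via the exponential expansion; these are the same perturbations viewed through different lenses, and your more explicit treatment of the degenerate (flat-Hessian) directions is if anything a little more careful than the paper's.
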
\vspace{2mm}

\begin{proof} (+ Prop.\ref{Prop1}) Consider a linear function $f(Q) = \mathrm{trace}(Q^T B)$ with $Q \in SO(n)$ and $B \in \mathbb{R}^{n \times n}$; $\mathrm{grad}_{\mathbb{R}^{n \times n}}(f) = B$ so $\mathrm{grad}_{SO(n)}(f) =\frac{Q}{2} (Q^T B - B^T Q)$. Since $Q$ is invertible, critical points of $f$ satisfy $(Q^T B - B^T Q) = 0$, meaning that they take the form described by Lemma \ref{lem1}. Using notations of Lemma \ref{lem1}, write $R = H \Lambda H^T$ where $\Lambda$ contains the (non-negative) eigenvalues of $R$. This leads to
$$Q = U H J H^T \quad \Rightarrow \quad Q^T B = H J \Lambda H^T \quad \Rightarrow \quad f(Q) = -{\textstyle \sum_{j=1}^{l}} \; \Lambda_{jj} + {\textstyle \sum_{j=l+1}^{n}} \; \Lambda_{jj} \; .$$
If $l \geq 2$, select any $m \in [2,l]$ and define $Q_{\varepsilon} = U H J A H^T$ where $A$ is the identity matrix except that $A(1,1)=A(m,m)=\cos(\varepsilon)$ and $A(1,m)=-A(m,1)=\sin(\varepsilon)$ with $\varepsilon$ arbitrarily small. It is straightforward to see that $f(Q_{\varepsilon}) > f(Q)$ unless $\Lambda_{11} = \Lambda_{mm} = 0$.
Similarly, if $l = 1$ and $\exists$ $m \geq 2$ such that $\Lambda_{mm} < \Lambda_{11}$, then $f(Q_{\varepsilon}) > f(Q)$ with $Q_{\varepsilon}$ and $A$ defined as previously. Therefore,
\begin{enumerate}
\item if $\det(B) \geq 0$, local maxima require $l=0$ such that $Q=U$ and $f(Q)$ is the sum of the eigenvalues of $R$;
\item if $\det(B) \leq 0$, local maxima require $U$ to take the form of Lemma \ref{lem1} with $l=1$ and $\Lambda_{11} \leq \Lambda_{mm} \forall m$; thus the first column of $H$ corresponds to a smallest eigen- value of $R$ and $f(Q)$ is the sum of $n-1$ largest eigenvalues minus the smallest one.
\end{enumerate}

This shows that all maxima of $f(Q)$ are global maxima (since they all take the same value) and, with $B = C_e$, characterizes the $\IAM$.
\end{proof}\vspace{2mm}

\paragraph*{The Grassmann manifold} The projection of a matrix $M \in \mathbb{S}_n^{+}$ onto the tangent space to $\Gr$ at $\Pi_k$ is given in \cite{SPGrass} as $\Pi_{k} M \Pi_{\bot k} + \Pi_{\bot k} M \Pi_k$. This leads to
\begin{equation}\label{Alg:Gradient:GrassProj}
\dot{\Pi}_k = 2 \alpha \; \, {\textstyle \sum_j} \, a_{jk} \, (\Pi_k \Pi_j \Pi_{\bot k} + \Pi_{\bot k} \Pi_j \Pi_k) \; , \qquad k=1...N \; .
\end{equation}
In practice, the basis representation $Y_k$ is handier than $\Pi_k$ since it involves smaller matrices. Computing the gradient of $P_L(\lbrace \Pi_k \, , \; k=1...N \rbrace) = P_L(\lbrace Y_k \, Y_k^T \, , \; k=1...N \rbrace)$ directly on the quotient manifold as explained in \cite{PAGrass} leads to the algorithm
\begin{equation}\label{Alg:Gradient:GrassStiefel}
\dot{Y}_k = 4 \alpha \; \, {\textstyle \sum_j} \, a_{jk} \; \left( Y_j \, M_{j \cdot k} - Y_k \, M_{j \cdot k}^T M_{j \cdot k} \right)   \; , \qquad k=1...N \; ,
\end{equation}
where the $p \times p$ matrices $M_{j \cdot k}$ are defined as $M_{j \cdot k} = Y_j^T Y_k$. For theoretical purposes, the projector representation is an easier choice, as for the following proofs.\vspace{2mm}

\begin{proposition}\label{Prop9}
The Grassmann manifold satisfies Assumption \ref{Ass2}.
\end{proposition}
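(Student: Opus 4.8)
The plan is to mirror exactly the argument used for $SO(n)$ in Proposition 8, replacing the orthogonal-group machinery by the spectral decomposition of symmetric matrices. We work with the projector representation, so that a point of $\Gr$ is $\Pi = Y Y^T$ with $Y^T Y = I_p$, $\Pi$ is embedded in $\mathbb{S}_n^+$, and the linear function to be maximized is $f(\Pi) = \mathrm{trace}(\Pi B)$ for fixed $B \in \mathbb{S}_n^+$ (the only case we need is $B = C_e$, which is symmetric positive semi-definite, but the argument goes through for any symmetric $B$, and one may even reduce to symmetric $B$ since $\mathrm{trace}(\Pi B) = \mathrm{trace}(\Pi \tfrac{B+B^T}{2})$ when $\Pi$ is symmetric). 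First I would compute the Riemannian gradient: from $\mathrm{grad}_{\mathbb{S}_n}(f) = B$ and the projection formula $\mathrm{Proj}_{T\M,\Pi}(M) = \Pi M \Pi_\bot + \Pi_\bot M \Pi$ recalled from \cite{SPGrass}, one gets $\mathrm{grad}_{\Gr}(f) = \Pi B \Pi_\bot + \Pi_\bot B \Pi$. Hence the critical points are exactly those $\Pi$ with $\Pi B \Pi_\bot = 0$, i.e. $\Pi$ and $B$ commute (using $\Pi_\bot = I_n - \Pi$ and symmetry of $B$), which means $\Pi$ projects onto a sum of eigenspaces of $B$: there is an orthonormal eigenbasis of $B$ in which $\Pi$ is the diagonal projector onto a choice of $p$ eigenvectors.

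Next I would evaluate $f$ at a critical point: if $\Pi$ projects onto eigenvectors of $B$ with eigenvalues $\mu_{i_1} \ge \dots$ (a size-$p$ sub-multiset of the eigenvalues $\mu_1 \ge \mu_2 \ge \dots \ge \mu_n$ of $B$), then $f(\Pi) = \sum_{s=1}^p \mu_{i_s}$. The key step is the second-order / perturbation argument showing such a $\Pi$ is a local maximum only if it picks the $p$ largest eigenvalues. Suppose the chosen index set omits some eigenvector $v$ with eigenvalue $\mu_{out}$ and includes some eigenvector $u$ with eigenvalue $\mu_{in} < \mu_{out}$. Exactly as in the $SO(n)$ proof, rotate within the plane $\mathrm{span}\{u,v\}$ by a small angle $\varepsilon$: replace $u$ by $\cos(\varepsilon)\, u + \sin(\varepsilon)\, v$ in the spanning set, leaving all other basis vectors fixed. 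This stays on $\Gr$, and a direct expansion gives $f(\Pi_\varepsilon) = f(\Pi) + \sin^2(\varepsilon)\,(\mu_{out} - \mu_{in}) > f(\Pi)$ for small $\varepsilon \ne 0$. Therefore at a local maximum no such swap is possible, which forces the chosen eigenvalues to be the $p$ largest; all local maxima attain the common value $\sum_{s=1}^p \mu_s$, hence are global. Finally, when $\mu_p > \mu_{p+1}$ the dominant $p$-eigenspace is unique, so the $\IAM$ reduces to a single point; otherwise every choice among the tied eigenvalues gives a distinct maximizer. Taking $B = C_e$ this is precisely the statement of Proposition 2.

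The main obstacle is purely bookkeeping rather than conceptual: one must handle the case of repeated eigenvalues of $B$ cleanly (so that "the eigenspace spanned by $\Pi$'s columns" is well defined as a sum of full eigenspaces, and the perturbation $u \mapsto \cos(\varepsilon) u + \sin(\varepsilon) v$ is only used when $\mu_{in} \ne \mu_{out}$, since a rotation within a single eigenspace changes nothing and does not help), and one must be slightly careful that the perturbed spanning set still consists of orthonormal vectors so that $\Pi_\varepsilon$ is genuinely a rank-$p$ orthonormal projector and thus a point of $\Gr$ in the projector embedding. Once that is set up, the chain "critical point $\Leftrightarrow$ $\Pi$ commutes with $B$ $\Rightarrow$ (local max) $\Rightarrow$ dominant $p$-eigenspace $\Rightarrow$ global max" is immediate, and the uniqueness clause follows from the strict gap $\mu_p > \mu_{p+1}$. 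Alternatively, one could invoke directly the Ky Fan maximum principle, $\max_{\Pi} \mathrm{trace}(\Pi B) = \sum_{s=1}^p \mu_s$ with the argmax being the dominant $p$-eigenspace(s), together with the commutation characterization of critical points to rule out non-global local maxima; I would mention this as the shortcut but give the self-contained perturbation argument above to keep the paper's exposition uniform with the $SO(n)$ case.
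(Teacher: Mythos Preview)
Your proposal is correct and follows essentially the same route as the paper's own proof: compute $\mathrm{grad}_{\Gr}(f) = \Pi B \Pi_\bot + \Pi_\bot B \Pi$, deduce that critical points are exactly the $\Pi$ projecting onto invariant subspaces of $B$, then use the in-plane rotation $u \mapsto \cos(\varepsilon)u + \sin(\varepsilon)v$ to show a critical point cannot be a local maximum unless it selects $p$ largest eigenvalues, whence all local maxima share the value $\sum_{s=1}^p \mu_s$ and are global. Your extra remarks on repeated eigenvalues and the Ky Fan shortcut are sound but not needed for the argument.
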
\vspace{2mm}

\begin{proof} (+ Prop.\ref{Prop2}) Consider a linear function $f(\Pi) = \mathrm{trace}(\Pi^T B)$ where $B \in \mathbb{S}_n^{+}$ and $\Pi$ represents $\mathcal{Y} \in \Gr$; $\mathrm{grad}_{\mathbb{R}^{n \times n}}(f) = B$ so $\mathrm{grad}_{\Gr}(f) = \Pi B \Pi_{\bot} + \Pi_{\bot} B \Pi$. The ranges of the first and second terms in $\mathrm{grad}_{\Gr}(f)$ are at most $\mathcal{Y}$ and its orthogonal complement respectively, so they both equal zero at a critical point $\mathcal{Y}^{\ast}$, such that $\mathcal{Y}^{\ast}$ is an invariant subspace of $B$. In an appropriate basis $(e_1... e_n)$, write $\Pi^{\ast}=\mathrm{diag}(1,... 1, 0,... 0)$ and $B=\mathrm{diag}(\mu_{1},... \mu_{p}, \mu_{p+1}... \mu_{n})$. If $\exists \; d \leq p$ and $l > p$ such that $\mu_d < \mu_l$, then any variation of $\Pi^{\ast}$ rotating $e_d$ towards $e_l$ strictly increases $f(\Pi)$. Therefore, at local maxima of $f(\Pi)$, the $p$-dimensional space corresponding to $\Pi$ must be an eigenspace of $B$ corresponding to $p$ largest eigenvalues. This implies that at any local maximum, $f(\Pi)$ equals the sum of $p$ largest eigenvalues of $B$, so Assumption \ref{Ass2} is satisfied. Replacing $B$ by $C_e$ proves Proposition \ref{Prop2}.
\end{proof}



\section{Consensus algorithms with estimator variables} Section 6 derives algorithms that lead to a consensus situation linked to the interconnection graph. But in many applications, the interconnection graph is just a restriction on communication possibilities, under which one actually wants to achieve a consensus for the complete graph. Moreover, allowing directed and time-varying communication graphs is desirable for robustness. This section presents algorithms achieving the same performance as those of \S 6 for the equally-weighted complete graph --- that is, driving the swarm to synchronization or to a subset of the anti-consensus configurations for the equally-weighted complete graph which seems to contain little more than balancing --- under very weak conditions on the actual communication graph. However, this reduction of information channels must be compensated by adding a consensus variable $x_k \in \E$, which interconnected agents are able to communicate, to the state space of each agent.

\subsection{Synchronization algorithm} For synchronization purposes, the agents run a consensus algorithm on their estimator variables $x_k$ in $\E$, $k=1...N$, initialized arbitrarily but independently and such that they can take any value in an open subset of $\E$; $\forall k$, agent $k$'s position $y_k$ on $\M$ independently tracks (the projection on $\M$ of) $x_k$. This leads to
\begin{eqnarray}
\label{RCA:Synch:General1} \dot{x}_k & \,=\, & \beta \; \, {\textstyle \sum_j} \, a_{jk} \, (x_j - x_k) \phantom{KKKKKKKKKKKKk} , \quad \beta > 0 \\
\label{RCA:Synch:General2} \dot{y}_k & \,=\, & \gamma_S \; \mathrm{grad}_{k,\M}(y_k^T \, x_k) \,=\, \gamma_S \;  \mathrm{Proj}_{T\M,k}(x_k) \phantom{kkkkk} , \quad \gamma_S > 0   \; , \quad k=1...N . \phantom{kkkk}
\end{eqnarray}
Equation (\ref{RCA:Synch:General1}) is a classical consensus algorithm in $\E$, where $\dot{x}_k(t)$ points from $x_k(t)$ towards the centroid of the (appropriately weighted) $x_j(t)$ for which $j \rightsquigarrow k$ at time $t$. According to \cite{MOREAU, MOREAU2, olfati}, if the time-varying communication graph $G(t)$ is piecewise continuous in time and uniformly connected, then all the $x_k$ exponentially converge to a common consensus value $x_{\infty}$; moreover, if $G(t)$ is balanced for all $t$, then $x_{\infty} = \frac{1}{N} \sum_k x_k(0)$ (i.e. $x_{\infty}$ is the centroid of the initial $x_k$). This implies the following convergence property for (\ref{RCA:Synch:General1}),(\ref{RCA:Synch:General2}), where the notation $\IAM_g$ generalizes the definition (\ref{Def:IAM&AIAM:Alt2}) of the $\IAM$ when the points defining $C_e$ are not on $\M$.\vspace{2mm}

\begin{proposition}\label{Prop10}
Consider a piecewise continuous and uniformly connected graph $G(t)$ and a manifold $\M$ satisfying Assumptions \ref{Ass1} and \ref{Ass2}. The only stable limit configuration of the $y_k$ under (\ref{RCA:Synch:General1}),(\ref{RCA:Synch:General2}), with the $x_k$ initialized arbitrarily but independently and such that they can take any value in an open subset of $\E$, is synchronization at $y_{\infty} = \mathrm{Proj}_{T\M,k}(x_{\infty})$; if $G(t)$ is balanced, $y_{\infty} = \IAM_g\lbrace x_k(0),\, k=1...N \rbrace$.
\end{proposition}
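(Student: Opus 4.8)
The plan is to exploit the cascade structure of the system (\ref{RCA:Synch:General1})--(\ref{RCA:Synch:General2}): equation (\ref{RCA:Synch:General1}) is an autonomous consensus algorithm in $\E$ that does not depend on the $y_k$, and equation (\ref{RCA:Synch:General2}) is a driven system whose input is the (asymptotically constant) signal $x_k(t)$. First I would invoke the cited results on Euclidean consensus (\cite{MOREAU, MOREAU2, olfati}): for a piecewise continuous, uniformly connected $\delta$-digraph, all $x_k(t) \to x_\infty$ exponentially, with $x_\infty = \tfrac1N \sum_k x_k(0)$ when $G(t)$ is balanced. Then I would treat (\ref{RCA:Synch:General2}) as an asymptotically autonomous system converging to the limit flow $\dot y_k = \gamma_S \, \mathrm{Proj}_{T\M,k}(x_\infty)$, which is exactly a separate copy (one per agent) of the gradient ascent on the linear function $c \mapsto c^T x_\infty$ over $\M$.

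The key steps, in order: (i) state the exponential convergence $x_k \to x_\infty$ and identify $x_\infty$ in the balanced case; (ii) analyse the limit flow $\dot y_k = \gamma_S\,\mathrm{Proj}_{T\M,k}(x_\infty)$ on $\M$ — since $\M$ is compact, this is a bounded gradient flow of $y_k \mapsto y_k^T x_\infty$, so each $y_k$ converges to the critical set of this linear function, and by Assumption \ref{Ass2} the only asymptotically stable critical points are the global maximizers, i.e. $y_k \to \mathrm{Proj}_{\M}(x_\infty)$, the $\IAM_g$ of $\lbrace x_k(0)\rbrace$ when $G(t)$ is balanced; (iii) promote this conclusion from the limit flow to the actual (asymptotically autonomous) system, which is where the real work lies; (iv) observe that since the $x_k$ all converge to the \emph{same} $x_\infty$, all $y_k$ converge to the \emph{same} limit point (generically unique), i.e. synchronization, and rule out the non-maximizing critical configurations as unstable by noting that the exponentially small perturbation $x_k(t) - x_\infty$ generically pushes the $y_k$ off any unstable critical manifold; (v) note the genericity/openness caveat already built into the statement ("initialized arbitrarily but independently and such that they can take any value in an open subset of $\E$") handles the measure-zero sets where $\IAM_g$ is not a single point or where $x_\infty$ lands exactly on a lower critical value.

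The main obstacle is step (iii): passing from the behaviour of the limit flow to the behaviour of the time-varying system (\ref{RCA:Synch:General2}). Because $x_k(t) \to x_\infty$ only asymptotically, $y_k(t)$ need not converge to an equilibrium of the limit flow in general — one must use that the convergence $x_k(t)\to x_\infty$ is \emph{exponential} (hence integrable) together with compactness of $\M$ and the fact that, away from the critical set of $c\mapsto c^T x_\infty$, the limit flow has a strictly positive gradient norm, so a Lyapunov-type argument on $P_k(y_k) := y_k^T x_\infty$ shows $\dot P_k = \gamma_S \|\mathrm{grad}_{k,\M}(P_k)\|^2 + (\text{term bounded by } \|x_k(t)-x_\infty\|)$, and the perturbation term is integrable; a standard asymptotically-autonomous-systems / Barbalat argument then forces $\mathrm{grad}_{k,\M}(P_k)\to 0$, i.e. convergence to the critical set, after which Assumption \ref{Ass2} and a local stability analysis near each critical component single out the global maximizer as the only \emph{stable} limit. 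I would also have to be slightly careful that $y_\infty$ in the non-balanced case is only characterized as $\mathrm{Proj}_{T\M,k}(x_\infty)$ (i.e. the projection/$\IAM_g$ of the common limit $x_\infty$), not computed explicitly, which is all the statement claims.
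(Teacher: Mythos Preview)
Your overall strategy matches the paper's: exploit the cascade structure, invoke Euclidean consensus for $x_k \to x_\infty$, analyse the limit gradient flow $\dot y_k = \gamma_S\,\mathrm{Proj}_{T\M,k}(x_\infty)$, and use Assumption~\ref{Ass2} to single out the global maximizer as the only stable limit. Two points of difference are worth noting.

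For step~(iii), the paper does not use an integrable-perturbation/Barbalat argument. Instead it invokes the asymptotically-autonomous-semiflows theory of \cite{ChainRecurrent}, which says the $\omega$-limit sets of (\ref{RCA:Synch:General1}),(\ref{RCA:Synch:General2}) are contained in the chain recurrent set of the limit system; then, since the limit system is a smooth gradient flow on a compact manifold, \cite{ChRec2} together with Sard's theorem identifies the chain recurrent set with the critical set. Your Barbalat route works too and is arguably more elementary, but the chain-recurrence machinery is cleaner here because it bypasses any explicit rate estimate on $x_k(t)-x_\infty$ and delivers the full $\omega$-limit-set conclusion in one stroke.

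For steps~(iv)--(v), your treatment of genericity is where you are vaguest, and it is precisely where the paper supplies a concrete argument you should adopt. The paper proves a short ``Property~$o$'': every open neighbourhood of any $x_o \in \E$ contains a point $x_a$ for which $c \mapsto c^T x_a$ has a \emph{unique} maximizer (take $x_a = x_o + \sigma y_\ast$ for any maximizer $y_\ast$ and small $\sigma>0$). This turns the hypothesis ``$x_k(0)$ arbitrary in an open set'' into the statement that generically $x_\infty$ yields a unique maximizer, and stability of that open condition under the convex dynamics (\ref{RCA:Synch:General1}) then gives the synchronization conclusion cleanly. Your phrase ``the exponentially small perturbation $x_k(t)-x_\infty$ generically pushes the $y_k$ off any unstable critical manifold'' is not quite the right mechanism: the instability the paper exploits is with respect to the \emph{initial} $x_k$ (equivalently $x_\infty$), not with respect to the transient forcing on $y_k$.
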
\vspace{2mm}

\begin{proof}
Convergence of (\ref{RCA:Synch:General1}) towards $x_k = x_{\infty}$ $\forall k$ is proved in \cite{MOREAU2}; the property $x_{\infty} = \frac{1}{N} \sum_k x_k(0)$ for balanced graphs is easy to check (see \cite{olfati}). As a consequence, the asymptotic form of (\ref{RCA:Synch:General1}),(\ref{RCA:Synch:General2}) is a set of $N$ independent systems
\begin{eqnarray}
\label{Proof6_1a} x_k & = & x_{\infty} \\
\label{Proof6_1b} \dot{y}_k & = & \gamma_S \, \mathrm{Proj}_{T\M,k}(x_{\infty})   \; , \qquad k=1...N \; ,
\end{eqnarray}
where $x_{\infty}$ is a constant. According to \cite{ChainRecurrent}, the $\omega$-limit sets of the original system (\ref{RCA:Synch:General1}),(\ref{RCA:Synch:General2}) correspond to the chain recurrent sets of the asymptotic system (\ref{Proof6_1a}),(\ref{Proof6_1b}). The first equation is trivial. According to Proposition 4 in \cite{ChRec2} and Sard's theorem, since (\ref{Proof6_1b}) is a gradient ascent algorithm for $f(y_k) = y_k^T x_{\infty}$ and $f(y_k)$ is smooth (as the restriction of a smooth function to the smooth embedded manifold $\M$), the chain recurrent set of (\ref{Proof6_1b}) is equal to its critical points. Since $x_{\infty}$ is a linear combination of the $x_k(0)$, variations of the $x_k(0)$ are equivalent to variations of $x_{\infty}$.

\emph{Property $o$:} Any open neighborhood $O$ of any point $x_o \in \E$ contains a point $x_a$ for which $f(y_k)$ has a unique (local $=$ global, by Assumption \ref{Ass2}) maximizer.
\newline \emph{Proof:} If $y_k^T \, x_o$ has multiple maximizers, select one of them, call it $y_{\ast}$. Then for $\sigma > 0$,
$y_k^T x_o + \sigma \, y_k^T y_{\ast} \leq y_{\ast}^T x_o + \sigma \, y_k^T y_{\ast} \leq y_{\ast}^T x_o + \sigma \, y_{\ast}^T y_{\ast}$ with equality holding if and only if $y_k = y_{\ast}$, so $y_{\ast}$ is the unique maximizer of $y_k^T \, (x_o + \sigma y_{\ast})$. Since any open neighborhood $O$ of $x_o$ contains points of the form $x_a = x_o + \sigma y_{\ast}$, $\sigma > 0$, property $o$ is proved.

Because of Property $o$, with respect to variations of the $x_k$, the situation ``$f(y_k)$ has multiple maximizers'' is unstable. The situation ``$f(y_k)$ has a unique maximizer'' is stable since it corresponds to a non-empty open set in $\E$; thus a convex neighborhood of $x_{\infty}$ can be found in which the $x_k(t)$ will stay by convexity of (\ref{RCA:Synch:General1}) and where $f(y_k)$ has a unique maximizer. With respect to variations of the $y_k$, the (thus unique) maximizer of $y_k^T \, x_k$ is the only stable equilibrium for gradient ascent algorithm (\ref{RCA:Synch:General2}), such that for $x_k \rightarrow x_{\infty}$ the only stable situation is synchronization.
\end{proof}

\subsection{Anti-consensus algorithm} For anti-consensus, in analogy with the previous section, each $y_k$ evolves according to a gradient algorithm to maximize its distance to $x_k(t)$. If $x_k(t)$ asymptotically converges to $C_e(t)$, this becomes equivalent to the gradient anti-consensus algorithm (\ref{Alg:Gradient:Complete}). Imposing $x_k(0) = y_k(0)$ $\forall k$, the following algorithm achieves this purpose when $G(t)$ is balanced $\forall t$:
\begin{eqnarray}
\label{RedBal:Gen1} \dot{x}_k & \,=\, & \beta \; \, {\textstyle \sum_{j=1}^N} \; a_{jk} \, (x_j - x_k) \; + \dot{y}_k \phantom{KKKKKKKjkj} , \quad \beta > 0\\
\label{RedBal:Gen2} \dot{y}_k & \,=\, & \gamma_B \, \mathrm{grad}_{k,\M}(y_k^T x_k) \,=\, \gamma_B \,  \mathrm{Proj}_{T\M,k}(x_k) \phantom{KKK} , \quad \gamma_B < 0   \; , \quad k=1...N \; . \phantom{kkkk}
\end{eqnarray}
Note that the variables $x_k$ and $y_k$ are fully coupled; in a discrete-time version of this system, this essential feature of the algorithm must be retained in the form of \emph{implicit} update equations in order to ensure convergence (see \cite{MY2} for details).\vspace{2mm}

\begin{proposition}\label{Prop11}
Consider a piecewise continuous, uniformly connected and balanced graph $G(t)$ and a manifold $\M$ satisfying Assumptions \ref{Ass1} and \ref{Ass2}. Then, algorithm (\ref{RedBal:Gen1}),(\ref{RedBal:Gen2}) with initial conditions $x_k(0) = y_k(0)$ $\forall k$ converges to an equilibrium configuration of the anti-consensus algorithm for the equally-weighted complete graph, that is (\ref{Alg:Gradient:Complete}) with $\alpha < 0$.
\end{proposition}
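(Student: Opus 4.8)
The plan is to show that the coupled system $(x,y)$ asymptotically collapses onto the invariant set $\lbrace x_k = C_e \ \forall k\rbrace$, on which it reduces \emph{exactly} to the complete-graph gradient flow (\ref{Alg:Gradient:Complete}) with $2\alpha N = \gamma_B$, hence $\alpha < 0$, and then to conclude with the chain-recurrence argument already used for Proposition \ref{Prop10}. The first step is to establish the invariant $\tfrac1N\sum_k x_k(t) = \tfrac1N\sum_k y_k(t) = C_e(t)$ for all $t$. Summing (\ref{RedBal:Gen1}) over $k$ and using that $G(t)$ is balanced makes the consensus term drop out, since $\sum_{j,k} a_{jk}(x_j-x_k) = \sum_j d_j^{(o)} x_j - \sum_k d_k^{(i)} x_k = 0$; hence $\tfrac{d}{dt}\tfrac1N\sum_k x_k = \tfrac1N\sum_k \dot y_k = \dot C_e$, and with the initialization $x_k(0)=y_k(0)$ the two centroids coincide for all $t\ge 0$.

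Next I would exhibit $W(t) := \tfrac12\sum_k \Vert x_k(t)\Vert^2$ as a Lyapunov function. Using the balanced property once more, $\sum_{j,k} a_{jk}\, x_k^T(x_j-x_k) = -\tfrac12\sum_{j,k} a_{jk}\Vert x_j-x_k\Vert^2 \le 0$, while $\sum_k x_k^T \dot y_k = \gamma_B \sum_k x_k^T \mathrm{Proj}_{T\M,k}(x_k) = \gamma_B\sum_k \Vert \mathrm{Proj}_{T\M,k}(x_k)\Vert^2 \le 0$ because $\gamma_B<0$ and $\mathrm{Proj}_{T\M,k}$ is an orthogonal projector. Thus $\dot W \le 0$; $W$ is bounded below, so it converges, all $x_k$ stay bounded (and $y_k\in\M$ trivially), which bounds $\dot x_k$ and $\dot y_k$. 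Integrating $-\dot W$ yields $\int_0^\infty \sum_k \Vert \mathrm{Proj}_{T\M,k}(x_k)\Vert^2\,dt<\infty$ and $\int_0^\infty \sum_{j,k} a_{jk}\Vert x_j-x_k\Vert^2\,dt<\infty$. Since $\dot y_k = \gamma_B\,\mathrm{Proj}_{T\M,k}(x_k)$ is Lipschitz in $t$ (it is a smooth function of the Lipschitz, bounded-derivative signals $x_k,y_k$), Barbalat's lemma applied to $\Vert\dot y_k\Vert^2$ gives $\dot y_k\to 0$, i.e. $\mathrm{Proj}_{T\M,k}(x_k)\to 0$ for every $k$.

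From $\dot y_k\to 0$ I would then deduce that the estimator disagreement $\tilde x_k := x_k - C_e$ tends to $0$: the $\tilde x_k$ obey the consensus protocol (\ref{RedBal:Gen1}) forced by the vanishing input $\dot y_k - \dot C_e$, and under a piecewise continuous, uniformly connected, balanced graph the unforced protocol contracts disagreement uniformly (the input-to-state property underlying the convergence results of \cite{MOREAU,MOREAU2,olfati} invoked for (\ref{RCA:Synch:General1}),(\ref{RCA:Synch:General2})), so a vanishing input forces $x_k(t)-C_e(t)\to 0$ for all $k$. On the resulting limit set $\lbrace x_k = C_e \ \forall k\rbrace$ the $y$-dynamics reduce to $\dot y_k = \gamma_B\,\mathrm{Proj}_{T\M,k}(C_e) = \gamma_B\,\mathrm{Proj}_{T\M,k}(C_e-y_k)$, which is precisely (\ref{Alg:Gradient:Complete}) with $\alpha = \gamma_B/(2N)<0$, an autonomous gradient descent of $P=\tfrac12\Vert C_e\Vert^2$ on the compact manifold $\M^N$. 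Exactly as in the proof of Proposition \ref{Prop10}, the $\omega$-limit set of the full time-varying system is a chain recurrent set of this limiting gradient flow (\cite{ChainRecurrent}), which by Proposition 4 of \cite{ChRec2} together with Sard's theorem coincides with its critical set; hence the $y_k$ converge to an equilibrium configuration of (\ref{Alg:Gradient:Complete}) with $\alpha<0$.

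The main obstacle I anticipate is the third step: transferring the known convergence of the \emph{unforced}, time-varying consensus protocol into the genuinely perturbed setting here, where the forcing $\dot y_k$ is only guaranteed to be square-integrable and to vanish asymptotically (not integrable), and the graph is merely uniformly connected rather than connected at each instant; this requires a careful input-to-state / exponential-contraction estimate for the disagreement. A secondary, more technical point is justifying the reduction to the limiting gradient flow rigorously, since that flow is approached only asymptotically and along a time-varying trajectory, so the chain-recurrence machinery of \cite{ChainRecurrent} must be applied with the same care as in the proof of Proposition \ref{Prop10}.
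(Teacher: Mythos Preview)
Your first three steps---the centroid invariant $\tfrac1N\sum_k x_k=C_e$, the Lyapunov function $W=\tfrac12\sum_k\Vert x_k\Vert^2$ with $\dot W\le 0$, and Barbalat to get $\dot y_k\to 0$---are exactly the paper's proof. The only difference is the endgame. You propose to show $x_k\to C_e$ by an ISS argument (consensus with vanishing forcing $\dot y_k-\dot C_e$), then view the limiting $y$-dynamics as the complete-graph gradient flow and invoke chain recurrence on that. The paper instead applies the asymptotically-autonomous machinery of \cite{ChainRecurrent} directly to the $(x,y)$ system: once $\dot y_k\to 0$, the limiting system is the \emph{unforced} linear consensus on $x$ together with the static constraint $\mathrm{Proj}_{T\M,k}(x_k)=0$; the chain recurrent set of that consensus is its equilibrium set $x_k=x_\infty$ $\forall k$, and the invariant then forces $x_\infty=C_e$, so $\mathrm{Proj}_{T\M,k}(C_e)=0$---precisely the equilibria of (\ref{Alg:Gradient:Complete}). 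This route sidesteps entirely the ISS estimate you flag as the main obstacle. Conversely, your route is more explicit about what happens to the $x_k$, but note that once you have both $\dot y_k\to 0$ and $x_k-C_e\to 0$, continuity of $\mathrm{Proj}_{T\M,k}$ already places every $\omega$-limit point at an equilibrium of (\ref{Alg:Gradient:Complete}); your final chain-recurrence step on the gradient flow is then superfluous.
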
\vspace{2mm}

\begin{proof}
First show that $\tfrac{1}{N} \sum_k x_k(t) = \tfrac{1}{N}\sum_k y_k(t) = C_e(t)$. Since $x_k(0) = y_k(0)$ $\forall k$, it is true for $t=0$. Thus it remains to show that $\sum_k \dot{x}_k(t) = \sum_k \dot{y}_k(t)$. This is the case because a balanced graph ensures that the first two terms on the right side of the following expression cancel each other:
$${\textstyle \sum_k} \, \dot{x}_k(t) \; = \; \beta \; {\textstyle \sum_j} \, \left( {\textstyle \sum_k} \, a_{jk} \right) \, x_j \;-\; \beta \; {\textstyle \sum_k} \, \left( {\textstyle \sum_j} \, a_{jk} \right) \, x_k \;+\; {\textstyle \sum_k} \, \dot{y}_k(t) \; .$$

Next, prove that $\forall k$, $\dot{y}_k(t)$ is a uniformly continuous function in $L_2(0,+\infty)$ such that Barbalat's Lemma implies $\dot{y}_k \rightarrow 0$. First show that $W(t) = \frac{1}{2} \sum_k x_k(t)^T x_k(t)$ is never increasing along the solutions of (\ref{RedBal:Gen1}),(\ref{RedBal:Gen2}). Denoting by $(x)_{j}$, $j=1...m$, the vectors of length $N$ containing the $j$-th component of every $x_k$, $k=1...N$ and by $L^{(i)}$ the in-Laplacian of the varying graph associated to the $a_{jk}$, one obtains
$$\dot{W}(t) \;=\; {\textstyle \sum_k} \, x_k^T \dot{x}_k \;=\; {\textstyle \sum_k} \, x_k^T \dot{y}_k \;-\; \beta \; {\textstyle \sum_j} (x)_{j}^T L^{(i)} (x)_{j} \; .$$
The term containing $\, L^{(i)} \,$ is non-positive because the Laplacian of balanced graphs is positive semi-definite (see \cite{Wi}). Replacing $\dot{y}_k$ from (\ref{RedBal:Gen2}) and noting that \linebreak
$x_k^T \; \mathrm{Proj}_{T\M,k}(x_k) \;=\; \left( \mathrm{Proj}_{T\M,k}(x_k) \right)^T \; \mathrm{Proj}_{T\M,k}(x_k)$, one obtains
\begin{equation}\label{ZZZ}
\dot{W}(t) \;=\; \gamma_B \; {\textstyle \sum_k} \, \Vert \mathrm{Proj}_{T\M,k}(x_k) \Vert^2 \;-\; \beta \; {\textstyle \sum_j} \, (x)_{j}^T L^{(i)} (x)_{j} \leq 0 \; .
\end{equation}
Thus $W(t) \leq W(0) = \frac{N}{2} \, r_{\M}^2$ which implies that each $\dot{y}_k(t)$ is in $L_2(0,+\infty)$ since
$$\tfrac{1}{\vert \gamma_B \vert} \; {\textstyle \sum_k \int_0^{+\infty}} \, \Vert \dot{y}_k(t) \Vert^2 \, dt \;\leq\; {\textstyle -\int_0^{+\infty}} \, \dot{W}(t) \, dt \; \leq \; \tfrac{N}{2} \, r_{\M}^2 \; .$$
$W(t) \leq W(0)$ also implies that $x_k$ is uniformly bounded $\forall k$; from (\ref{RedBal:Gen2}), $\dot{y}_k$ is uniformly bounded as well. Combining these two observations, with the $a_{jk}$ bounded, (\ref{RedBal:Gen1}) shows that $x_k$ has a bounded derivative and hence is Lipschitz in $t$ $\forall k$. Now write
$$\Vert \dot{y}_k(x_k(t_1),y_k(t_1)) - \dot{y}_k(x_k(t_2),y_k(t_2)) \Vert \; \leq \phantom{kkkkkkkkkkkkkkkkkkkkkkkkkkkkkkkkkk}$$
\vspace{-12mm}

$$\phantom{kkk} \Vert \dot{y}_k(x_k(t_1),y_k(t_1)) - \dot{y}_k(x_k(t_2),y_k(t_1)) \Vert + \Vert \dot{y}_k(x_k(t_2),y_k(t_1)) - \dot{y}_k(x_k(t_2),y_k(t_2)) \Vert \, .$$
The first term on the second line is bounded by $r_1 \, \vert t_1 - t_2 \vert$ for some $r_1$ since $\dot{y}_k$ is linear in $x_k$ and $x_k$ is Lipschitz in $t$. The second term on the second line is bounded by $r_2 \, \vert t_1 - t_2 \vert$ for some $r_2$ since $\dot{y}_k$ is Lipschitz in $y_k$ (as the gradient of a smooth function along the smooth manifold $\M$) and $\tfrac{d}{dt}(y_k) = \dot{y}_k$ is uniformly bounded. Hence, $\dot{y}_k$ is Lipschitz in $t$ and therefore uniformly continuous in $t$, such that Barbalat's Lemma can be applied. Therefore $\dot{y}_k \rightarrow 0$. Thus from 
\cite{ChainRecurrent}, the $\omega$-limit sets of (\ref{RCA:Synch:General1}),(\ref{RCA:Synch:General2}) correspond to the chain recurrent sets of the asymptotic system
\begin{eqnarray*}
\dot{x}_k & \,=\, & \beta \; \, {\textstyle \sum_j} \, a_{jk} \, (x_j - x_k) \\
0 & = & \gamma_B \,  \mathrm{Proj}_{T\M,k}(x_k) \; .
\end{eqnarray*}
The second line is just a static condition. The chain recurrent set of the linear consensus algorithm in the first line reduces to its equilibrium set $x_k = x_{\infty}$ $\forall k$. But then, from the beginning of the proof, $x_k= C_e$ $\forall k$ such that the static condition becomes $0 = \gamma_B \,  \mathrm{Proj}_{T\M,k}(C_e)$ $\forall k$. This is the condition for an equilibrium of anti-consensus algorithm (\ref{Alg:Gradient:Complete}) with $\gamma_B = 2 \alpha N$.
\end{proof}

In simulations, a swarm applying (\ref{RedBal:Gen1}),(\ref{RedBal:Gen2}) with $x_k(0) = y_k(0)$ $\forall k$ seems to generically converge to an anti-consensus configuration of the equally-weighted complete graph, that is a \emph{stable} equilibrium configuration of (\ref{Alg:Gradient:Complete}) with $\alpha < 0$.

\subsection{Examples} Applying this strategy to the circle yields the results of \cite{MY2}, the $x_k$ reduce to vectors of $\mathbb{R}^2$; algorithms (\ref{RCA:Synch:General2}) and (\ref{RedBal:Gen2}) respectively drive the $y_k$ towards and away from the central projection of $x_k$ onto the unit circle.\vspace{2mm}

\paragraph*{The special orthogonal and Grassmann manifolds} The particular balancing algorithms will not be detailed as they are directly obtained from their synchronization counterparts. Introducing auxiliary $n \times n$-matrices $X_k$, (\ref{RCA:Synch:General1}) may be transcribed verbatim. Using previously presented expressions for $\mathrm{Proj}_{T\M,k}(X_k)$, (\ref{RCA:Synch:General2}) becomes
\begin{eqnarray}
\label{RCA:Synch:SOn2} \text{On } SO(n): \quad \phantom{ikKKK} Q_k^{-1}\dot{Q}_k & \;=\; & \tfrac{\gamma_S}{2} \, \left( Q_k^T X_k - X_k^T Q_k \right) \phantom{kkkkkk} , \quad k=1...N \, . \phantom{kkk} \\[2mm]
\label{RCA:Synch:Gr2} \text{On } \Gr: \quad \phantom{kKKK} \dot{\Pi}_k & \;=\; & \gamma_S \, \left( \Pi_k X_k \Pi_{\bot k} + \Pi_{\bot k} X_k \Pi_k \right)   \; , \quad k=1...N \, . \phantom{kkk}
\end{eqnarray}
Note that for $\Gr$, the projector representation must be used in (\ref{RCA:Synch:General1}) and (\ref{RedBal:Gen1}), such that using $n \times n$ matrices $X_k$ becomes unavoidable.

\subsection{Remark about the communication of estimator variables} To implement the algorithms of this section, interconnected agents must communicate the values of their estimator variable $x_k$. It is important to note that the variables $x_k$ may not just be a set of abstract scalars for each agent $k$: since $x_k$ interacts with the geometric $y_k$, it must be a geometric quantity too. However, the $x_k$ evolve in $\E$ while the original system lives on $\M$; the relative position of agents on $\M$ is a meaningful measurement, but nothing ensures \emph{a priori} that a similar thing can be done in $\E$. A solution could be to use a common (thus external) reference frame in $\E$ and transmit the coordinates of the $x_k$ in this frame. That solution would unfortunately imply that the swarm loses its full autonomy; however, the external frame is just used for ``translation'' purposes and does not interfer with the dynamics of the system.

When $\M$ is (a subgroup of) $SO(n)$, the algorithms can be reformulated such that they work completely autonomously if interconnected agents measure their relative positions $Q_k^T Q_j$. Indeed, define $Z_k = Q_k^T X_k$. Then (\ref{RCA:Synch:General1}),(\ref{RCA:Synch:General2}) for instance becomes
\begin{eqnarray}
\label{RCA:Shape:S1} \dot{Z}_k & \;=\; & (Q_k^T \dot{Q}_k)^T Z_k \;+\; \beta \; {\textstyle \sum_j} \, a_{jk} \left( (Q_k^T Q_j) Z_j - Z_k \right)\\
\label{RCA:Shape:S2} Q_k^T \dot{Q}_k & \;=\; & \tfrac{\gamma_S}{2} \; \left( Z_k - Z_k^T \right) \phantom{kkkkkkkkkkkkkkkkkkkkkkkkk}, \; k=1...N \; . \phantom{kkk}
\end{eqnarray}
In this formulation, each agent $k$ can represent $Z_k$ as an array of scalars, whose columns express the column-vectors of $X_k$ as coordinates in a local frame attached to $k$ (i.e. in a frame rotated by $Q_k$ with respect to a hypothetical reference frame). Pre-multiplying $Z_j$ by $Q_k^T Q_j$ expresses $X_j$ in the local frame of $k$, and $Q_k^T \dot{Q}_k$ expresses the velocity of $Q_k$ (with respect to a hypothetical fixed reference) in the local frame of $k$ as well. Thus (\ref{RCA:Shape:S1}),(\ref{RCA:Shape:S2}) actually corresponds to (\ref{RCA:Synch:General1}),(\ref{RCA:Synch:General2}) written in the local frame of $k$. Each agent $k$ gets from its neighbors $j \rightsquigarrow k$ their relative positions $Q_k^T Q_j$ and the $n \times n$ arrays of numbers $Z_j$; from this it computes the update $\dot{Z}_k$ to its own array of numbers $Z_k$ and the move it has to make with respect to its current position, $Q_k^T \dot{Q}_k$. The same can be done for the anti-consensus algorithm.\vspace{2mm}


\section{Conclusion}

The present paper makes three main contributions.

First, it defines the induced arithmetic mean of $N$ points on an embedded connected compact homogeneous manifold $\M$; though it differs from the traditional Karcher mean, it has a clear geometric meaning with the advantage of being easily computable --- see analytical solutions for $SO(n)$ and $\Gr$.

Secondly, a definition of consensus directly linked to the induced arithmetic mean is presented for these manifolds. In particular, the notion of balancing introduced in \cite{SPL2005} for the circle is extended to connected compact homogeneous manifolds. Consensus for the equally-weighted complete graph is equivalent to synchronization. Likewise, it appears in simulations that anti-consensus for the equally-weighted complete graph leads to balancing (if $N$ is large enough), even though this could not be proved.

Thirdly, consensus is formulated as an optimization problem and distributed consensus algorithms are designed for $N$ agents moving on a connected compact homogeneous manifold. In a first step, gradient algorithms are derived for fixed undirected interconnection graphs; (anti-)consensus configurations are their only stable equilibria. Similar algorithms are considered when the graph is allowed to be directed and/or to vary, but their convergence properties are mostly open. In a second step, the algorithms are modified by incorporating an estimator variable for each agent. In this setting, convergence to the (anti-)consensus states of the equally-weighted complete graph can be established theoretically for time-varying and directed interconnection graphs. The meaningful way of communicating estimators between agents remains an open issue when $\M$ is not a subgroup of $SO(n)$.

Running examples $SO(n)$ and $\Gr$ illustrate the validity of the discussion and provide geometric insight. The models and results obtained by applying this framework to the circle are strictly equivalent to existing models and results (most significantly in \cite{SPL2005},\cite{TAC2},\cite{MY2}). This draws a link from the present discussion to the vast literature about synchronization and balancing on the circle.\vspace{2mm}


\section{Appendix}
\begin{lemma}\label{lem1} If $g(Q) = Q^T B - B^T Q$ with $Q \in SO(n)$ and $B \in \mathbb{R}^{n \times n}$, then $g(Q) = 0$ iff $Q = U H J H^T$, where $B = U R$ is a polar decomposition of $B$, the columns of $H$ contain (orthonormalized) eigenvectors of $R$ and 
$$J=\left(
\begin{array}{cc}
-I_{l} & 0 \\
0 & I_{n-l}
\end{array}
\right) \; , \qquad \begin{array}{ll}
l \text{ even} & \text{if } \det(U) > 0 \\ l \text{ odd} & \text{if } \det(U) < 0
\end{array}
$$
\end{lemma}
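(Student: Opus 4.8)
The plan is to characterize the solution set of the matrix equation $Q^T B = B^T Q$ by exploiting the polar decomposition $B = UR$ with $U$ orthogonal (not necessarily special) and $R$ symmetric positive semi-definite. Substituting $B = UR$ into $g(Q)=0$ gives $Q^T U R = R U^T Q$. Introducing the auxiliary orthogonal matrix $V = U^T Q$ (so $Q = UV$ and $\det V = \det U \cdot \det Q = \det U$ since $Q\in SO(n)$), the equation becomes $V^T R = R V^T$, i.e. $R V^T = V^T R$, so $V$ commutes with $R$. Hence the whole problem reduces to: describe the orthogonal matrices $V$ that commute with a fixed symmetric positive semi-definite $R$, subject to the sign constraint $\det V = \det U$.

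Next I would diagonalize $R = H\Lambda H^T$ with $H$ orthogonal (columns = orthonormalized eigenvectors of $R$) and $\Lambda$ diagonal. Writing $\tilde V = H^T V H$, the commutation $VR = RV$ becomes $\tilde V \Lambda = \Lambda \tilde V$, which means $\tilde V$ is block-diagonal with respect to the eigenspaces of $R$: within each eigenspace $\tilde V$ can be an arbitrary orthogonal block. If all eigenvalues of $R$ are simple, this forces $\tilde V$ to be diagonal with entries $\pm 1$, i.e. $\tilde V = J$ for some diagonal sign matrix $J$; then $V = H J H^T$ and $Q = U H J H^T$. The determinant constraint $\det V = \det \tilde V = (-1)^l = \det U$ (where $l$ is the number of $-1$'s) yields exactly the parity condition: $l$ even iff $\det U > 0$, $l$ odd iff $\det U < 0$. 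In the degenerate case of repeated eigenvalues the orthogonal blocks need not be diagonal, but one can always re-choose the eigenvector basis $H$ within each eigenspace so as to diagonalize that block; this is why the statement says ``$H$ contains (orthonormalized) eigenvectors of $R$'' rather than pinning down $H$ uniquely — any such diagonalizing choice works, and absorbing the block rotations into $H$ recovers the same normal form $Q = UHJH^T$.

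The converse direction is a direct verification: if $Q = UHJH^T$ with $H$ diagonalizing $R$ and $J$ a diagonal sign matrix, then $Q^T B = HJH^T U^T U R = HJH^T R = HJ\Lambda H^T$, which is symmetric, so $Q^T B = B^T Q$ and $g(Q) = 0$; and $\det Q = \det U \cdot \det J = \det U \cdot (-1)^l$, so $Q \in SO(n)$ precisely under the stated parity condition. The main obstacle I anticipate is the non-uniqueness issues surrounding the polar decomposition and the eigendecomposition: when $B$ is singular $U$ is not unique, and when $R$ has repeated eigenvalues $H$ is not unique, so the lemma must be read as an existential statement over valid choices of $U$ and $H$ rather than a parametrization by a canonical $U,H$. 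Making the bookkeeping of these ambiguities precise — in particular checking that the parity of $l$ is well-defined given a fixed $Q$ and a fixed choice of $U$ — is the only delicate point; the linear-algebra core (commutation with a symmetric matrix forces simultaneous block-diagonalizability) is routine.
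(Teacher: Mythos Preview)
Your reduction contains an algebraic slip that breaks the argument. With $V = U^T Q$ you have $Q^T U = V^T$ and $U^T Q = V$, so the equation $Q^T U R = R U^T Q$ becomes
\[
V^T R \;=\; R\,V,
\]
not $V^T R = R V^T$. In other words the condition is that $RV$ (equivalently $V^T R$) be \emph{symmetric}, not that $V$ commute with $R$. These are genuinely different: take $R = I_n$, where every orthogonal $V$ commutes with $R$, yet $V^T = V$ forces $V$ to be a symmetric orthogonal matrix (eigenvalues $\pm 1$), which is exactly the form $HJH^T$ the lemma claims --- and which your commutation condition would not single out.

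This error propagates. From commutation you conclude that $\tilde V = H^T V H$ is block-diagonal with an \emph{arbitrary} orthogonal block on each eigenspace of $R$, and then assert that one can ``re-choose the eigenvector basis $H$ within each eigenspace so as to diagonalize that block.'' But an arbitrary real orthogonal matrix is not diagonalizable by a real orthogonal similarity: a $2\times 2$ rotation by $\theta\notin\{0,\pi\}$ has no real eigenvectors. What actually makes the blocks diagonalizable is the correct condition $V^T R = RV$: on an eigenspace with eigenvalue $\lambda>0$ it forces the block of $\tilde V$ to be symmetric (hence real-diagonalizable to $\pm 1$), while the interaction between distinct eigenvalues and the handling of the kernel of $R$ require the extra work that the paper does --- using the unit-norm constraints on rows and columns of $\tilde V$ together with the symmetry of $\Lambda\tilde V$ to kill the off-diagonal entries. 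Your converse verification and the determinant/parity bookkeeping are fine; it is the forward implication that needs to be redone starting from the correct reduced equation.
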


\begin{proof}
All matrices $Q$ of the given form obviously satisfy that $Q^T B$ is symmetric. The following constructive proof shows that this is the only possible form.\vspace{2mm}

Since $U^T B = R$ is symmetric with $U \in O(n)$, the problem is to find all matrices $T = U^T Q \in O(n)$ such that $S=T^T R$ is symmetric and $\det(T) = \det(U)$. Work in a basis of eigenvectors $H^{\ast}$ diagonalizing $R$ with its eigenvalues placed in decreasing order $\lambda_1 \geq \lambda_2 ... \geq \lambda_n \geq 0$. The following shows that $T$ is diagonal in that basis. Then orthogonality of $T$ imposes values $1$ or $-1$ on the diagonal, the number $l$ of $-1$ being compatible with $\det(T) = \det(U)$; the final form follows by returning to the original basis and reordering the eigenvectors such that those corresponding to $-1$ are in the first columns.

The $j^{th}$ column of $S$ is simply the $j^{th}$ column of $T$ multiplied by $\lambda_j$. Therefore:
\begin{enumerate}
\item If $\lambda_i = \lambda_j$, then $H^{\ast}$ may be chosen such that the corresponding submatrix $T(i:j,\, i:j) = $ intersection of rows $i$ to $j$ and columns $i$ to $j$ of $T\; \;$ is diagonal.
\item If $\lambda_{p+1} = 0$ and $\lambda_p \neq 0$, then $S$ symmetric implies $T(n-p:n,\, 1:p) = 0$. As $T(n-p:n,\, n-p:n)$ is diagonal from 1., only diagonal elements are non-zero in the last $n-p$ rows of $T$. Rows and columns of $T$ being normalized, $T(1:p,\, n-p:n) = 0$.
\item Consider $i_- \leq p$ and $i_+$ the smallest index such that $\lambda_{i_+} < \lambda_{i_-}$. Note that
\begin{equation}\label{Proof:Lemma}
{\textstyle \sum_j T_{i_-j}^2 = \sum_j T_{ji_-}^2 = 1} \text{ (orthogonality) and } \; {\textstyle \sum_j S_{i_-j}^2 = \sum_j S_{ji_-}^2} \text{ (symmetry).}
\end{equation}
\end{enumerate}
Start with $i_- = 1$ and assume $\lambda_{i_+} > 0$. (\ref{Proof:Lemma}) can only be satisfied if $T_{jk} = T_{kj} = 0$ $\forall j \geq i_+$ and $\forall k \in [i_, i_+)$; 1.~further implies $T_{jk} = T_{kj} = 0$ $\forall j \neq k$ and $\forall k \in [i_-, i_+)$. This argument is repeated by defining the new $i_{-}$ as being the previous $i_{+}$ until $\lambda_{i_+} = 0$ (case 2.) or $\lambda_{i_{-}} = \lambda_n > 0$. This leaves $T$ diagonal.
\end{proof}
\vspace{2mm}




\begin{thebibliography}{10}

\bibitem{PAGrass}
P.A. Absil, R.~Mahony, and R.~Sepulchre.
\newblock Riemannian geometry of {G}rassmann manifolds with a view on
  algorithmic computation.
\newblock {\em Acta Applicandae Mathematicae}, 80(2):199--220, 2004.

\bibitem{PAbook}
P.A. Absil, R.~Mahony, and R.~Sepulchre.
\newblock {\em Optimization Algorithms on Matrix Manifolds}.
\newblock Princeton University Press, 2007.

\bibitem{Codingbook}
A.~Barg.
\newblock Extremal problems of coding theory.
\newblock In H.~Niederreiter, editor, {\em Coding Theory and Cryptography},
  Notes of lectures at the Institute for Math. Sciences of the University of
  Singapore, pages 1--48. World Scientific, 2002.

\bibitem{Barg1}
A.~Barg and D.Y. Nogin.
\newblock Bounds on packings of spheres in the {G}rassmann manifold.
\newblock {\em IEEE Transactions on Information Theory}, 48(9):2450--2454,
  2002.

\bibitem{NorwayLeaderFollower}
A.K. Bondhus, K.Y. Pettersen, and J.T. Gravdahl.
\newblock Leader/follower synchronization of satellite attitude without angular
  velocity measurements.
\newblock {\em Proc. 44th IEEE Conf. on Decision and Control}, pages
  7270--7277, 2005.

\bibitem{Brockett1}
R.W. Brockett.
\newblock Dynamical systems that sort lists, diagonalize matrices, and solve
  linear programming problems.
\newblock {\em Linear Algebra and its Applications}, 146:79--91, 1991.

\bibitem{RefMatrix}
M.~Brookes.
\newblock Matrix reference manual.
\newblock {\em Imperial College London}, 1998-2005.

\bibitem{BulloThesis}
F.~Bullo and R.~Murray (advisor).
\newblock Nonlinear control of mechanical systems: a {R}iemannian geometry
  approach.
\newblock {\em PhD Thesis, CalTech}, 1998.

\bibitem{MoM:B&S}
S.R. Buss and J.P. Fillmore.
\newblock Spherical averages and applications to spherical splines and
  interpolation.
\newblock {\em ACM Transactions on Graphics}, 20(2):95--126, 2001.

\bibitem{Graphs}
F.R.K. Chung.
\newblock {\em Spectral Graph Theory}.
\newblock Number~92 in Regional Conference Series in Mathematics. AMS, 1997.

\bibitem{Conway}
J.H. Conway, R.H. Hardin, and N.J.A. Sloane.
\newblock Packing lines, planes, etc.: Packings in {G}rassmannian spaces.
\newblock {\em Exper.Math.}, 5(2):139--159, 1996.

\bibitem{BULLO}
J.~Cortes, S.~Martinez, and F.~Bullo.
\newblock Coordinated deployment of mobile sensing networks with limited-range
  interactions.
\newblock {\em Proc. 43rd IEEE Conf. on Decision and Control}, pages
  1944--1949, 2004.

\bibitem{1972mean}
T.D. Downs.
\newblock Orientation statistics.
\newblock {\em Biometrika}, 59:665--676, 1972.

\bibitem{Edelman99}
A.~Edelman, T.A. Arias, and S.T. Smith.
\newblock The geometry of algorithms with orthogonality constraints.
\newblock {\em SIAM J. Matrix Anal. Appl.}, 20(2):303--353, 1999.

\bibitem{MoM:Galp}
G.A. Galperin.
\newblock A concept of the mass center of a system of material points in the
  constant curvature spaces.
\newblock {\em Comm. Math. Phys.}, 154:63--84, 1993.

\bibitem{KarcherMeanGroisser}
D.~Groisser.
\newblock Newton's method, zeroes of vector fields, and the {R}iemannian center
  of mass.
\newblock {\em Adv. Appl. Math.}, 33:95--135, 2004.

\bibitem{TheisGr}
P.~Gruber and F.J. Theis.
\newblock Grassmann clustering.
\newblock {\em Proc. 14th European Signal Processing Conference}, 2006.

\bibitem{H&Mbook}
U.~Helmke and J.B. Moore.
\newblock {\em Optimization and dynamical systems}.
\newblock Springer, 1994.

\bibitem{HOPFIELD}
J.J. Hopfield.
\newblock Neural networks and physical systems with emergent collective
  computational capabilities.
\newblock {\em Proc. Nat. Academy of Sciences}, 79:2554--2558, 1982.

\bibitem{MoM:Hueper}
K.~Hueper and J.~Manton.
\newblock The {K}archer mean of points on ${SO}(n)$.
\newblock {\em Talk at Cesame (UCL, Belgium)}, 2004.

\bibitem{ChRec2}
M.~Hurley.
\newblock Chain recurrence, semiflows, and gradients.
\newblock {\em Journal of Dynamics and Differential Equations}, 7(3):437--456,
  1995.

\bibitem{JADBABAIE}
A.~Jadbabaie, J.~Lin, and A.S. Morse.
\newblock Coordination of groups of mobile autonomous agents using nearest
  neighbor rules.
\newblock {\em IEEE Transactions on Automatic Control}, 48(6):988--1001, 2003.

\bibitem{JandK}
E.~Justh and P.~Krishnaprasad.
\newblock A simple control law for {UAV} formation flying.
\newblock Technical report, TR 2002-38, ISR, University of Maryland, 2002.

\bibitem{Karcher}
H.~Karcher.
\newblock Riemannian center of mass and mollifier smoothing.
\newblock {\em Comm. Pure and Applied Math.}, 30:509--541, 1977.

\bibitem{SO3book}
T.R. Krogstad and J.T. Gravdahl.
\newblock Coordinated attitude control of satellites in formation.
\newblock In {\em Group Coordination and Cooperative Control}, volume 336 of
  {\em Lecture Notes in Control and Information Sciences}, chapter~9, pages
  153--170. Springer, 2006.

\bibitem{Kuramoto}
Y.~Kuramoto.
\newblock In {\em Internat. Symp. on Math. Problems in Theoretical Physics},
  volume~39 of {\em Lecture Notes in Physics}, page 420. Springer, 1975.

\bibitem{BeardOnSats}
J.R. Lawton and R.W. Beard.
\newblock Synchronized multiple spacecraft rotations.
\newblock {\em Automatica}, 38:1359--1364, 2002.

\bibitem{LEONARDgen}
N.E. Leonard, D.~Paley, F.~Lekien, R.~Sepulchre, D.~Frantantoni, and R.~Davis.
\newblock Collective motion, sensor networks and ocean sampling.
\newblock {\em Proceedings of the IEEE}, 95(1):48--74, January 2007.

\bibitem{SPGrass}
A.~Machado and I.~Salavessa.
\newblock Grassmannian manifolds as subsets of {E}uclidean spaces.
\newblock {\em Res. Notes in Math.}, 131:85--102, 1985.

\bibitem{ChainRecurrent}
K.~Mischaikow, H.~Smith, and H.R. Thieme.
\newblock Asymptotically autonomous semi-flows, chain recurrence and {L}yapunov
  functions.
\newblock {\em Transactions of the AMS}, 347(5):1669--1685, 1995.

\bibitem{MOAKHER_SO3}
M.~Moakher.
\newblock Means and averaging in the group of rotations.
\newblock {\em SIAM J. Matrix Anal. Appl.}, 24(1):1--16, 2002.

\bibitem{MOREAU2}
L.~Moreau.
\newblock Stability of continuous-time distributed consensus algorithms.
\newblock {\em Proc. 43rd IEEE Conf. on Decision and Control}, pages
  3998--4003, 2004.

\bibitem{MOREAU}
L.~Moreau.
\newblock Stability of multi-agent systems with time-dependent communication
  links.
\newblock {\em IEEE Transactions on Automatic Control}, 50(2):169--182, 2005.

\bibitem{Sujit1}
S.~Nair and N.E. Leonard.
\newblock Stabilization of a coordinated network of rotating rigid bodies.
\newblock {\em Proc. 43rd IEEE Conf. on Decision and Control}, pages
  4690--4695, 2004.

\bibitem{olfati}
R.~Olfati-Saber and R.M. Murray.
\newblock Consensus problems in networks of agents with switching topology and
  time-delays.
\newblock {\em IEEE Transactions on Automatic Control}, 49(9):1520--1533, 2004.

\bibitem{RiemannStatsPennec}
X.~Pennec.
\newblock Probabilities and statistics on {R}iemannian manifolds: a geometric
  approach.
\newblock {\em INRIA research report}, 5093, 2004.

\bibitem{MY1}
A.~Sarlette, R.~Sepulchre, and N.E. Leonard.
\newblock Discrete-time synchronization on the ${N}$-torus.
\newblock {\em Proc. 17th Intern. Symp. on Mathematical Theory of Networks and
  Systems}, pages 2408--2414, 2006.

\bibitem{MY3}
A.~Sarlette, R.~Sepulchre, and N.E. Leonard.
\newblock Cooperative attitude synchronization in satellite swarms: a consensus
  approach.
\newblock {\em Proc. 17th IFAC Symp. on Automatic Control in Aerospace}, 2007.

\bibitem{MY2}
L.~Scardovi, A.~Sarlette, and R.~Sepulchre.
\newblock Synchronization and balancing on the ${N}$-torus.
\newblock {\em Systems and Control Letters}, 56(5):335--341, 2007.

\bibitem{LUCA1}
L.~Scardovi and R.~Sepulchre.
\newblock Collective optimization over average quantities.
\newblock {\em Proc. 45th IEEE Conf. on Decision and Control}, 2006.

\bibitem{SPLbook}
R.~Sepulchre, D.~Paley, and N.E. Leonard.
\newblock Group coordination and cooperative control of steered particles in
  the plane.
\newblock In {\em Group Coordination and Cooperative Control}, volume 336 of
  {\em Lecture Notes in Control and Information Sciences}, chapter~13, pages
  217--232. Springer, 2006.

\bibitem{SPL2005}
R.~Sepulchre, D.~Paley, and N.E. Leonard.
\newblock Stabilization of planar collective motion with all-to-all
  communication.
\newblock {\em IEEE Transactions on Automatic Control}, 52(5):811--824, 2007.

\bibitem{TAC2}
R.~Sepulchre, D.~Paley, and N.E. Leonard.
\newblock Stabilization of planar collective motion with limited communication.
\newblock {\em IEEE Transactions on Automatic Control}, to appear.

\bibitem{StrKur}
S.H. Strogatz.
\newblock From {K}uramoto to {C}rawford: exploring the onset of synchronization
  in populations of coupled nonlinear oscillators.
\newblock {\em Physica D}, 143:1--20, 2000.

\bibitem{Sync}
S.H. Strogatz.
\newblock {\em Sync: The emerging science of spontaneous order}.
\newblock Hyperion, 2003.

\bibitem{Tsitsiklis1}
J.N. Tsitsiklis and D.P. Bertsekas.
\newblock Distributed asynchronous optimal routing in data networks.
\newblock {\em IEEE Transactions on Automatic Control}, 31(4):325--332, 1986.

\bibitem{Tsitsiklis2}
J.N. Tsitsiklis, D.P. Bertsekas, and M.~Athans.
\newblock Distributed asynchronous deterministic and stochastic gradient
  optimization algorithms.
\newblock {\em IEEE Transactions on Automatic Control}, 31(9):803--812, 1986.

\bibitem{VICSEK}
T.~Vicsek, A.~Czir\'ok, E.~Ben-Jacob, I.~Cohen, and O.~Shochet.
\newblock Novel type of phase transition in a system of self-driven particles.
\newblock {\em Physical Review Letters}, 75(6):1226--1229, 1995.

\bibitem{Wi}
J.~C. Willems.
\newblock Lyapunov functions for diagonally dominant systems.
\newblock {\em Automatica}, 12:519--523, 1976.

\end{thebibliography}
\end{document}